\documentclass[12pt,a4paper]{article}

\usepackage[margin=24mm]{geometry}
\usepackage{amsmath,amssymb,amsthm,comment,cite}
\usepackage{microtype}
\usepackage{xcolor}
\usepackage[hidelinks]{hyperref}

\newtheorem{theorem}{Theorem}[section]
\newtheorem{lemma}[theorem]{Lemma}
\newtheorem{proposition}[theorem]{Proposition}
\theoremstyle{remark}
\newtheorem{remark}[theorem]{Remark}
\newtheorem{example}[theorem]{Example}
\numberwithin{equation}{section}

\newcommand{\Z}{\mathbb{Z}}

\newcommand{\R}{\mathbb{R}}

\title{\textbf{Applications of the fixed point theorem for mappings contracting perimeters of triangles: ODEs, PDEs, integer and difference equations}}

\author{
    E. Petrov\thanks{Institute of Applied Mathematics and Mechanics of the NAS of Ukraine. Emails: \mbox{\url{eugeniy.petrov@gmail.com}}, \url{taranets_r@yahoo.com}}
    \and
    R. Salimov\thanks{Institute of Mathematics of the NAS of Ukraine. Email: \url{ruslan.salimov1@gmail.com}}
    \and
    R. Taranets\thanks{Yuriy Fedkovych Chernivtsi National University,
Kotsiubynskoho Str.~2,  58002 Chernivtsi, Ukraine}\ \ \footnotemark[1]
}

\date{}

\usepackage{etoolbox}
\AfterEndEnvironment{abstract}{%
\par\medskip
\noindent\textbf{2020 Mathematics Subject Classification.}
Primary 47H10;
Secondary 47H09, 34B10, 35J61, 35K58, 39A22, 93C62.

\par\smallskip
\noindent\textbf{Keywords.}
Mappings contracting perimeters of triangles;
fixed point theorem;
isolated points;
discrete invariant subsets;
nonlocal initial and boundary conditions;
ordinary and partial differential equations;
integer equations;
difference equations;
nonlocal measurements;
quantized feedback.
\par\medskip
}

\begin{document}
\maketitle
\vspace{-2em}
\begin{abstract}
Mappings contracting perimeters of triangles are Banach contractions whenever every point of the underlying metric space is an accumulation point. Consequently, applying the perimeter-contraction theorem on a complete metric space without isolated points yields an existence result already covered by Banach's fixed point theorem. This observation applies, in particular, to operators associated with differential and integral equations acting on standard nontrivial function spaces, such as spaces of continuous functions $C[a,b]$, Lebesgue spaces $L_p$, and Sobolev spaces $W^{k,p}$, equipped with their usual norm metrics.  Thus, applications extending beyond Banach's theorem for the same operator and metric require a complete invariant domain containing some number of isolated points. Such a domain may be a closed subset of a standard function space, despite the absence of isolated points in the ambient space. The main goal of this paper is to illustrate applications of the fixed point theorem for mappings contracting perimeters of triangles in settings where the associated operators are not Banach contractions. The differential-equation examples admit interpretations as stylized models combining continuous evolution, nonlocal measurement, and quantized feedback. The integer and difference-equation examples illustrate the effects of rounding on equilibria and periodic responses.
\end{abstract}

\section{Introduction}
Let $(X,d)$ be a metric space with $|X|\geqslant 3$. In \cite{P23} it was introduced and considered a new type of mappings that contract perimeters of triangles. Recall that $T\colon X\to X$ is a \emph{mapping contracting perimeters of triangles} on $X$ if there exists $\alpha\in [0,1)$ such that the inequality
  \begin{equation}\label{e1}
   d(Tx,Ty)+d(Ty,Tz)+d(Tx,Tz) \leqslant \alpha (d(x,y)+d(y,z)+d(x,z))
  \end{equation}
holds for all three pairwise distinct points $x,y,z \in X$. Note that the requirement for $x,y,z\in X$ to be pairwise distinct is essential. One can see that otherwise this definition is equivalent to the definition of contraction mapping.

The following theorem is the main result of the work~\cite{P23}.
\begin{theorem}\label{t1}
Let $(X,d)$, $|X|\geqslant 3$, be a complete metric space and let $T\colon X\to X$ be a mapping contracting perimeters of triangles on $X$. Then $T$ has a fixed point if and only if $T$ does not possess periodic points of prime period $2$. The number of fixed points is at most two.
\end{theorem}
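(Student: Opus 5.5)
The plan is to run the Picard iteration $x_{n+1}=Tx_n$ and deal separately with the two ways pairwise distinctness of iterates can fail. \emph{Necessity} is immediate: if $T$ has a fixed point and a point $x$ with $T^2x=x\neq Tx$, take the fixed point $z$. Then $z,x,Tx$ are pairwise distinct, since $z=x$ would force $Tx=x$. Applying \eqref{e1} to this triple gives $p\leqslant\alpha p$ with $p>0$ the perimeter, a contradiction. The same argument also controls the number of fixed points: three distinct fixed points would span a triangle of positive perimeter preserved by $T$, which is impossible, so there are at most two.

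For \emph{sufficiency}, assume $T$ has no periodic points of prime period $2$. Pick $x_0$ and form the orbit. If some $x_n=x_{n+1}$ we are done. If some $x_n=x_{n+2}$ with $x_n\ne x_{n+1}$, then $x_n$ is a periodic point of prime period $2$, which is excluded. Otherwise consecutive triples $x_n,x_{n+1},x_{n+2}$ are pairwise distinct. Writing $p_n=d(x_n,x_{n+1})+d(x_{n+1},x_{n+2})+d(x_n,x_{n+2})$, inequality \eqref{e1} gives $p_{n+1}\leqslant\alpha p_n$, so $p_n\leqslant\alpha^n p_0$. Hence $d(x_n,x_{n+1})\leqslant\alpha^n p_0$, which is summable, and $(x_n)$ is Cauchy. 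By completeness it converges to some $x^*$.

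It remains to show $Tx^*=x^*$. This is the main obstacle, because $T$ is not assumed continuous. My first attempt is to argue that all the $x_n$ are distinct. Suppose $x_i=x_j$ with $j>i$. Then the orbit is eventually periodic with period $j-i\geqslant 3$, so $p_n$ is periodic in $n$ and at least one $p_n$ is positive. But $p_n\to0$ forces every $p_n$ in the cycle to vanish, giving a contradiction. So the $x_n$ are pairwise distinct and in particular differ from $x^*$ for all large $n$, say for all $n\geqslant N$.

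Now suppose $Tx^*\neq x^*$. For large $n$ the points $x^*,x_n,x_{n+1}$ are pairwise distinct: $x_n\neq x_{n+1}$ always, and both differ from $x^*$ once $n\geqslant N$. Applying \eqref{e1} to this triple gives
\[
d(Tx^*,x_{n+1})+d(x_{n+1},x_{n+2})+d(Tx^*,x_{n+2})\leqslant\alpha\bigl(d(x^*,x_n)+d(x_n,x_{n+1})+d(x^*,x_{n+1})\bigr).
\]
The right-hand side tends to $0$. Hence $x_{n+1}\to Tx^*$, and by uniqueness of limits $Tx^*=x^*$, contradicting the assumption. This completes the argument.
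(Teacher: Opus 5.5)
Your proof is correct and follows the same Banach-type scheme as the original proof in \cite{P23}; the paper itself only quotes this theorem and does not prove it. The shared ingredients are Picard iterates, exclusion of $2$-cycles to keep consecutive triples nondegenerate, the geometric decay $p_n\leqslant\alpha^n p_0$, and the invariant-triangle argument both for necessity and for the bound of two fixed points.

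One detail is missing in the necessity part: you should also check that $z\neq Tx$. This holds because $z=Tx$ would give $x=T^2x=Tz=z$.

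For the last step you could alternatively invoke Proposition~\ref{p1}. Pairwise distinct iterates make $x^*$ an accumulation point, so $d(Tx^*,x_{n+1})\leqslant\alpha\, d(x^*,x_n)\to0$. Your direct use of the triples $x^*,x_n,x_{n+1}$ is essentially the same mechanism.
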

Although the proof of this theorem is based on the ideas of Banach's classical theorem, the essential difference
is that the definition of these mappings involves triplets of points instead of pairs. Moreover, an additional
condition was required to prevent the mapping $T$ from having periodic points of prime period $2$. Ordinary
contraction mappings form a proper subclass of these mappings, which allowed the classical Banach theorem (see \cite{Banach22}) to be obtained as a simple corollary.

The three-point formulation of perimeter contractivity has
subsequently been developed in several directions. In particular,
three-point analogues of the Kannan and Chatterjea conditions
have been investigated in~\cite{PB24Kannan,BP26Chatterjea,
PP24Chatterjea}, while enriched Kannan-type contractions in
normed spaces have been considered in~\cite{DIALGL24Enriched}.
Nonlinear extensions include $(F,G)$-contracting mappings,
an Edelstein-type fixed point theorem~\cite{BPS25FG},
and $\alpha$-$\psi$-mappings contracting triangle
perimeters~\cite{SBY25AlphaPsi}. Further generalizations
involve quadrilateral perimeter contractions~\cite{BMD25Quadrilaterals},
four-point Kannan-type mappings~\cite{BR26FourPoint},
and polygonal perimeter contractions~\cite{ZP25Polygons}.
A related approach considers contractions of the total pairwise
distances among $n$ points and investigates the existence of
periodic points~\cite{P25}. Three-point contractions involving
three metrics and their multivalued counterparts have been
studied in~\cite{JPS25Multivalued}, while connections with
$G$-metric fixed point theory have been established
in~\cite{JPaS25Gmetric}. Other extensions concern
multiplicative $\mathfrak{m}$-metric spaces~\cite{YK26Multiplicative}
and spaces equipped with multiple perturbed metrics~\cite{WDAZ26Perturbed}.
Additional developments address almost weakly Picard operators
and comparison functions~\cite{BA26Picard}, remetrization
techniques and the stability of fixed point problems~\cite{C26Remetrization},
and graph-based $p$-point orbital contractions~\cite{BBD26Graph}.
Collectively, these results demonstrate the connections between
multipoint contractivity, the multiplicity of fixed points,
periodic behavior, generalized metric structures, and
iterative approximation methods.

In~\cite{P23} it was also obtained the following important result.
\begin{proposition}\label{p1}
Let $(X,d)$, $|X|\geqslant 3$, be a metric space and let $T\colon X\to X$ be a mapping contracting perimeters of triangles. If $x$ is an accumulation point of  $X$, then inequality $d(Tx,Ty)\le\alpha d(x,y)$ holds for all points $y\in X$, where $\alpha$ is the coefficient from~(\ref{e1}). Consequently, if all points of $X$ are accumulation points, then $T$ is a standard Banach contraction.
\end{proposition}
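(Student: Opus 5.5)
The plan is to fix an accumulation point $x$ of $X$ and an arbitrary $y\in X$, and to use points $z$ close to $x$ as the third vertex of the triangle in (\ref{e1}). If $y=x$ there is nothing to prove, so assume $y\neq x$. Because $x$ is an accumulation point, there is a sequence $z_n\in X$ with $z_n\to x$ and $z_n\neq x$ for all $n$. Discarding finitely many terms, we may also assume $z_n\neq y$, since $d(z_n,x)<d(x,y)$ for large $n$. Then $x,y,z_n$ are pairwise distinct, and (\ref{e1}) gives
\begin{equation*}
d(Tx,Ty)+d(Ty,Tz_n)+d(Tx,Tz_n)\leqslant \alpha\bigl(d(x,y)+d(y,z_n)+d(x,z_n)\bigr).
\end{equation*}

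The key step is to pass to the limit $n\to\infty$. On the right-hand side, $d(x,z_n)\to 0$ and $d(y,z_n)\to d(x,y)$, so the right side tends to $2\alpha d(x,y)$. On the left-hand side we cannot assume continuity of $T$ at $x$ a priori. Instead we use only the triangle inequality
\begin{equation*}
d(Ty,Tz_n)+d(Tx,Tz_n)\geqslant d(Tx,Ty),
\end{equation*}
which bounds the left side below by $2d(Tx,Ty)$ for every $n$. Taking limits then yields $2d(Tx,Ty)\leqslant 2\alpha d(x,y)$, that is, $d(Tx,Ty)\le\alpha d(x,y)$. This bypasses what I expected to be the main obstacle, namely justifying $Tz_n\to Tx$: the triangle inequality makes any continuity argument unnecessary.

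For the consequence, suppose every point of $X$ is an accumulation point. For any $x,y\in X$, the point $x$ is an accumulation point, so the first part gives $d(Tx,Ty)\le\alpha d(x,y)$ with $\alpha<1$. Hence $T$ is a Banach contraction with the same constant $\alpha$.
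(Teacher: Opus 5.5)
Your proof is correct and complete, and your constant is exactly the $\alpha$ from (\ref{e1}). The paper does not prove this proposition; it quotes it from \cite{P23}, so there is no in-paper argument to compare with. Your route is the natural one: take a third vertex $z_n\ne x,y$ that tends to the accumulation point $x$ in (\ref{e1}).

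The step worth noting is your lower bound $d(Ty,Tz_n)+d(Tx,Tz_n)\geqslant d(Tx,Ty)$. It makes the passage to the limit independent of any behaviour of $T$ near $x$. A version that instead lets $Tz_n\to Tx$ would need continuity of $T$ at $x$, and that would have to be established first. So your argument is fully self-contained.

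You can even dispense with limits. Since $d(y,z)\leqslant d(x,y)+d(x,z)$, inequality (\ref{e1}) together with your lower bound gives
$d(Tx,Ty)\leqslant\alpha d(x,y)+\alpha d(x,z)$ for every $z\ne x,y$. The infimum of $d(x,z)$ over such $z$ is $0$ precisely because $x$ is an accumulation point.
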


Proposition~\ref{p1} shows that mappings contracting perimeters of triangles reduce to standard Banach contractions on spaces without isolated points, i.e., where every point is an accumulation point.
{ 
Moreover, mapping contracting perimeters of triangles that is not a Banach contraction must have at least two isolated points in its domain. This is a necessary condition. The presence of isolated points alone does not establish failure of Banach contractivity. Note also that if a fixed point of a mapping that contracts the perimeters of triangles is an accumulation point, then it is unique fixed point, see Proposition~3.7 in~\cite{P25} formulated for a more general class of mappings. In particular, this implies that if such a mapping has two distinct fixed points, they both are isolated.
}

{
\noindent\textbf{Physical background.}
Nonlocal initial and boundary conditions describe situations in which
a prescribed state is linked to measurements taken elsewhere in space
or at later times. Examples include the specification or conservation
of heat content~\cite{Cannon63,Ionkin77}, interior observations and
weighted spatial averages~\cite{BS69,Day82}, and thermostat models with
a sensor placed away from the actuator~\cite{KM04,GM21}. Nonlocal
Cauchy problems provide a related framework for initial states linked
to later observations~\cite{Byszewski91,Deng93,Pao95}. When a measurement
or actuator has finite resolution, an ideal uniform quantizer
restricts the control values to a lattice $a+h\mathbb Z$.

Quantization can alter the stability of a feedback loop. For certain
systems with fixed quantization, asymptotic stabilization may fail and
the dynamics may even be chaotic~\cite{Delchamps90}. Appropriately
adjusted quantizers can also permit global asymptotic
stabilization~\cite{BL00}. Rounding in digital recursive filters may
produce dead bands and limit cycles~\cite{OS10}, and it can change the
dynamics of spatially discretized mappings~\cite{DK93}.

The evolving state may remain continuous even when the control values
lie on a lattice. In the differential-equation examples below, each
lattice value determines a unique trajectory or elliptic solution.
The separation estimates proved below make this family a complete,
uniformly separated subset of the ambient function space. This
provides the discrete invariant domain on which we apply
Theorem~\ref{t1} and verify failure of Banach contractivity. The physical
interpretations use dimensionless variables and ideal quantizers;
they describe stationary fields or consistent initializations of
repeated operating cycles.
\par}

Let $\tau>0$ and let $f\colon\mathbb R\to\mathbb R$ be globally
Lipschitz with constant $L\ge0$. In Section~\ref{sec:general-nonlocal}, we consider a problem
\begin{equation}
\label{eq:general-problem}
u'(t)=f(u(t)),\quad 0\le t\le\tau,
\qquad u(0)=\Phi[u],
\end{equation}
where $\Phi\colon C([0,\tau],\mathbb R)\to\mathbb R$ is a functional of special type.
A solution is understood to belong to $C^1([0,\tau],\mathbb R)$.
Global Lipschitz continuity ensures, by the  {Picard--Lindel\"{o}f} theorem \cite{Picard1891, Lindelof1894} (see also~\cite[Chapter~2]{T12} for the global version), a unique solution for each
prescribed initial value, but it does not by itself ensure compatibility
with an arbitrary functional $\Phi$. For example, when $f=0$ and
$\Phi[u]=u(\tau)+1$, there is no solution; when $f=0$ and
$\Phi[u]=u(\tau)$, every constant function is a solution.
Thus the nonlocal condition needs separate hypotheses.
 {Nonlocal Cauchy problems of this kind were studied systematically by
Byszewski~\cite{Byszewski91} for abstract evolution equations. A distinctive
feature of the problems considered here is that $\Phi$ contains a
quantizer, so that the admissible initial values form a lattice.}

In Section~\ref{sec:pde-examples}, we extend this approach to
partial differential equations. We consider two nonlinear problems,
one elliptic and one parabolic, with nonlocal boundary and initial
conditions, respectively. Such conditions arise naturally in
mathematical models involving spatially distributed measurements
and feedback~\cite{Cannon63,BS69,Ionkin77,Day82,KM04,GM21}, and may provide a more realistic description of
certain physical processes than purely local conditions; nonlocal
initial conditions for parabolic equations are discussed
in~\cite{Deng93,Pao95}.
The resulting problems may be ill-posed in the sense of Hadamard,
since uniqueness of solutions is not guaranteed. In particular,
both examples admit two distinct solutions for suitable values
of their parameters. An important feature of our approach is that the
reduction to a discrete invariant set is exact:
every solution of the original nonlocal problem
belongs to the constructed family, and the solutions
of this problem coincide precisely with the fixed
points of the induced operator. Thus, no solutions
are lost in passing to the discrete setting.

Integer equations of the form $x=\lfloor f(x)\rfloor$
arise naturally in discrete dynamical systems involving
rounding and spatial discretization, where rounding may
significantly affect the existence of fixed points and
the long-term behavior of trajectories, see~\cite{DK93}.
In particular, fixed points and trajectories of the mapping
$x\mapsto\lfloor\lambda x\rfloor$ have been investigated
in~\cite{RSU16}, while periodic points and limiting behavior
of the more general affine mapping
$x\mapsto\lfloor\lambda x+\mu\rfloor$
have been studied in~\cite{H25}.
In Section~\ref{sec:integer-equations}, we apply the theory
of mappings contracting perimeters of triangles to such
integer equations. We establish local criteria for perimeter
contractivity and derive an explicit upper bound on the
number of iterations required to reach a fixed point.
Our results also apply to nonlinear integer equations.

Fixed point methods provide a useful framework for studying the
existence, uniqueness, and multiplicity of solutions of difference
equations. General background on difference equations can be found
in~\cite{DE-Elaydi05,DE-Agarwal00}.
The operator approach consists in expressing the desired solution
as a fixed point of a mapping on a suitable space of sequences.
For instance, the combination of the theory of linear difference
equations with Banach's fixed point theorem has been used
in~\cite{DE-Stevic17} to establish sufficient conditions for the
existence of a unique bounded solution of a nonhomogeneous
second-order linear recurrence. Metric fixed point theorems
have also been developed and applied to the existence of bounded
solutions of difference equations in~\cite{DE-Slyusarchuk19}.
Moreover, fixed point methods can yield multiplicity results.
In particular, discrete Green functions and a fixed point theorem
were employed in~\cite{DE-CRWZ07} to establish the existence of
multiple positive solutions of nonlinear second-order problems
with nonlocal boundary conditions.

In Section~\ref{sec5}, we apply the fixed point approach to
difference equations, considering bounded real-valued solutions
of nonautonomous scalar recurrences on the whole integer line.
We construct complete invariant subsets of $\ell_\infty(\mathbb Z)$
on which the associated operators contract perimeters of triangles.
Two examples illustrate the applicability of Theorem~\ref{t1}
in situations where Banach's contraction principle cannot
be applied.

\section{Nonlocal initial conditions for scalar differential equations}
\label{sec:general-nonlocal}
The aim of this section is to develop a fixed point approach to a class of nonlocal initial value problems for scalar differential equations. The main idea is to restrict the family of classical solutions corresponding to prescribed initial values to a suitable discrete subset of the ambient function space. In contrast to the full space $C([0,\tau],\mathbb R)$, such a subset consists of isolated points and therefore provides a setting in which mappings contracting perimeters of triangles need not reduce to Banach contractions.  {For the functionals considered below, we verify that the induced operators on these discrete sets contract perimeters of triangles and give conditions under which they fail to be Banach contractions in the supremum metric.}


Let $\tau>0$ and let $f\colon\mathbb R\to\mathbb R$ be
nonincreasing and globally Lipschitz with constant $L\ge0$.
For each $r\in\mathbb R$, the classical existence and uniqueness
theorem (see, e.\,g. \cite{T12}, Chapter~2) provides a unique solution
$Y_r\in C^1([0,\tau],\mathbb R)$ of
\begin{equation}
\label{eq:cauchy-family}
Y_r'(t)=f(Y_r(t)),\qquad Y_r(0)=r.
\end{equation}
Equip $C([0,\tau],\mathbb R)$ with the metric
\[
d(u,v)
=\max_{0\le t\le\tau}|u(t)-v(t)|,
\]
and write
\[
P_d(u,v,w)=d(u,v)+d(v,w)+d(w,u).
\]

The following elementary observation describes the metric geometry of the family of Cauchy trajectories and will be used throughout the section.

\begin{lemma}\label{lem:flow}
Let $\tau>0$ and let $f\colon\mathbb R\to\mathbb R$ be
nonincreasing and globally Lipschitz with constant $L\ge0$.
For each $r\in\mathbb R$, let
$Y_r\in C^1([0,\tau],\mathbb R)$ denote the unique solution
of \eqref{eq:cauchy-family}.
Then, for every $r<s$ and every $t\in[0,\tau]$,
\begin{equation}\label{eq:flow-bounds}
0<e^{-Lt}(s-r)\le Y_s(t)-Y_r(t)\le s-r.
\end{equation}
Consequently,
\begin{equation}\label{eq:flow-isometry}
d(Y_r,Y_s)=|r-s|
\qquad(r,s\in\mathbb R).
\end{equation}
\end{lemma}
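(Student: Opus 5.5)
We will study the difference $w(t)=Y_s(t)-Y_r(t)$ for fixed $r<s$. It satisfies $w(0)=s-r>0$ and
\[
w'(t)=f(Y_s(t))-f(Y_r(t)).
\]

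The first step is to show that $w(t)>0$ on $[0,\tau]$. Suppose $w(t_0)=0$ for some $t_0\in(0,\tau]$. Then $Y_s$ and $Y_r$ both solve the Cauchy problem $y'=f(y)$, $y(t_0)=Y_r(t_0)$. By uniqueness under the global Lipschitz condition (Picard--Lindel\"of, run backward in time), $Y_s\equiv Y_r$, which contradicts $Y_s(0)=s\ne r=Y_r(0)$. Since $w$ is continuous and $w(0)>0$, it follows that $w>0$ throughout.

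The second step gives the two-sided differential inequality. Because $Y_s(t)>Y_r(t)$ and $f$ is nonincreasing, $f(Y_s(t))-f(Y_r(t))\le0$. Hence $w'(t)\le 0$, so $w$ is nonincreasing and $w(t)\le w(0)=s-r$. By the Lipschitz bound, $f(Y_s)-f(Y_r)\ge -L|Y_s-Y_r|=-Lw$, so $w'(t)\ge -Lw(t)$. Then $(e^{Lt}w(t))'\ge0$, which integrates to $w(t)\ge e^{-Lt}(s-r)$. Positivity of $e^{-Lt}(s-r)$ is immediate. This proves \eqref{eq:flow-bounds}; everything here is routine Gr\"onwall-type reasoning.

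The third step derives \eqref{eq:flow-isometry}. By \eqref{eq:flow-bounds}, $|Y_s(t)-Y_r(t)|\le s-r$ for every $t$, and equality holds at $t=0$. Therefore the maximum over $[0,\tau]$ equals $s-r$. By symmetry the same holds for $r>s$, and the case $r=s$ is trivial.

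The only point requiring care is the first step. Without strict positivity we could not use the monotonicity of $f$ to get the sign of $w'$. Backward uniqueness of Lipschitz ODE solutions resolves it.
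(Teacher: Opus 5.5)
Your proposal is correct and matches the paper's proof step for step. Both show positivity of $Y_s-Y_r$ via backward uniqueness, derive $-Lw\le w'\le 0$ from the monotonicity and Lipschitz property of $f$, integrate with the factor $e^{Lt}$, and note that the maximum of the difference is attained at $t=0$ (you also spell out the continuity step from ``no zero'' to ``positive,'' which the paper leaves implicit).
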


\begin{proof}
Fix $r<s$ and put
\begin{equation}\label{eq:flow-difference}
\delta(t)=Y_s(t)-Y_r(t),\qquad 0\le t\le\tau.
\end{equation}
By \eqref{eq:cauchy-family},
\begin{equation}\label{eq:flow-derivative}
\delta(0)=s-r>0,\qquad
\delta'(t)=f(Y_s(t))-f(Y_r(t)).
\end{equation}

We first show that
\begin{equation}\label{eq:flow-positivity}
\delta(t)>0\qquad(0\le t\le\tau).
\end{equation}
Indeed, suppose that $\delta(t_0)=0$ for some
$t_0\in(0,\tau]$. Then $Y_s(t_0)=Y_r(t_0)$.
Since $f$ is globally Lipschitz, uniqueness for the
Cauchy problem holds in both time directions  {(the backward problem
$z'=-f(z)$ has the same Lipschitz constant; see~\cite[Chapter~2]{T12})}. 
Consequently, the two solutions must coincide on
$[0,t_0]$. In particular, $Y_s(0)=Y_r(0)$,
contrary to $s>r$. Thus \eqref{eq:flow-positivity}
holds.

Since $f$ is nonincreasing, \eqref{eq:flow-positivity}
implies
\[
f(Y_s(t))\le f(Y_r(t)).
\]
Moreover, by the Lipschitz condition,
\[
|f(Y_s(t))-f(Y_r(t))|
\le L|Y_s(t)-Y_r(t)|
=L\delta(t).
\]
Combining these inequalities with
\eqref{eq:flow-derivative}, we obtain
\begin{equation}\label{eq:flow-differential}
-L\delta(t)\le\delta'(t)\le0,
\qquad 0\le t\le\tau.
\end{equation}
The right-hand inequality in
\eqref{eq:flow-differential} shows that $\delta$
is nonincreasing. Hence
\begin{equation}\label{eq:flow-upper}
\delta(t)\le\delta(0)=s-r.
\end{equation}
On the other hand, the left-hand inequality in
\eqref{eq:flow-differential} gives
\[
\delta'(t)+L\delta(t)\ge0.
\]
Multiplying by $e^{Lt}$, we find
\begin{equation}\label{eq:flow-integrating-factor}
\frac{d}{dt}\bigl(e^{Lt}\delta(t)\bigr)
=e^{Lt}\bigl(\delta'(t)+L\delta(t)\bigr)
\ge0.
\end{equation}
Thus $e^{Lt}\delta(t)$ is nondecreasing, and
\begin{equation}\label{eq:flow-lower}
e^{Lt}\delta(t)\ge\delta(0)=s-r,
\qquad 0\le t\le\tau.
\end{equation}
Equivalently,
\[
\delta(t)\ge e^{-Lt}(s-r).
\]
Together with \eqref{eq:flow-upper}, this proves
\eqref{eq:flow-bounds}.

It remains to establish \eqref{eq:flow-isometry}.
By \eqref{eq:flow-bounds}, the function
$Y_s(t)-Y_r(t)$ is positive, bounded above
by $s-r$, and attains this value at $t=0$.
Hence
\[
d(Y_r,Y_s)
=\max_{0\le t\le\tau}|Y_s(t)-Y_r(t)|
=s-r.
\]
The case $r>s$ follows by symmetry, while
for $r=s$ the equality is immediate.
Thus \eqref{eq:flow-isometry} holds for all
$r,s\in\mathbb R$.
\end{proof}

\noindent\textbf{Nonlocal initial value problem.}
Let $a,b\in\mathbb R$, $h>0$, and let
$\mathcal H\colon C([0,\tau],\mathbb R)\to\mathbb R$
be a functional. Consider the nonlocal initial
value problem
\begin{equation}\label{eq:lattice-problem}
u'(t)=f(u(t)),\quad 0\le t\le\tau,
\qquad
u(0)=a+h\left\lfloor
b+\frac{\mathcal H[u]}h
\right\rfloor.
\end{equation}
Here $\lfloor x\rfloor$ denotes the greatest
integer not exceeding $x$. A solution of
\eqref{eq:lattice-problem} is a function
$u\in C^1([0,\tau],\mathbb R)$ satisfying
both relations.

For each $n\in\mathbb Z$, set
\[
u_n=Y_{a+hn},
\]
where $Y_r$ is the unique solution of
\eqref{eq:cauchy-family}, and define
\[
X_h=\{u_n:n\in\mathbb Z\}.
\]
Define the mapping $F_b\colon\mathbb Z\to\mathbb Z$
and the operator $T_b\colon X_h\to X_h$ by
\begin{equation}\label{fbn}
T_bu_n=u_{F_b(n)},  \, \text{ where } \, F_b(n)=\left\lfloor
b+\frac{\mathcal H[u_n]}h
\right\rfloor.
\end{equation}

\begin{proposition}\label{prop:fixed-points}
The set of solutions to the nonlocal initial value problem
\eqref{eq:lattice-problem} coincides exactly with the set of fixed
points of the operator \(T_b\) on the space \(X_h\).
\end{proposition}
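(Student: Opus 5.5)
The plan is to prove the two inclusions directly from the definitions, using only the uniqueness of solutions of the Cauchy problem \eqref{eq:cauchy-family} and the isometry \eqref{eq:flow-isometry} of Lemma~\ref{lem:flow}.

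\emph{Solutions are fixed points.} Let $u\in C^1([0,\tau],\mathbb R)$ solve \eqref{eq:lattice-problem} and set
\[
m=\left\lfloor b+\frac{\mathcal H[u]}h\right\rfloor\in\mathbb Z .
\]
Then $u(0)=a+hm$, and $u$ solves $u'=f(u)$ on $[0,\tau]$. By uniqueness for \eqref{eq:cauchy-family} with $r=a+hm$, we get $u=Y_{a+hm}=u_m$, so $u\in X_h$. Next, $F_b(m)=\lfloor b+\mathcal H[u_m]/h\rfloor=m$ because $u_m=u$, hence $T_bu_m=u_{F_b(m)}=u_m$.

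\emph{Fixed points are solutions.} Suppose $T_bu_n=u_n$ for some $n\in\mathbb Z$, i.e.\ $u_{F_b(n)}=u_n$. Here I first note that $T_b$ is well defined: the map $n\mapsto u_n$ is injective, since by \eqref{eq:flow-isometry} we have $d(u_n,u_m)=h|n-m|$ and $h>0$. The same injectivity gives $F_b(n)=n$. Therefore $u_n(0)=a+hn=a+h\lfloor b+\mathcal H[u_n]/h\rfloor$. Since $u_n=Y_{a+hn}$ is $C^1$ and satisfies the differential equation, $u_n$ solves \eqref{eq:lattice-problem}.

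The only point requiring care is the injectivity of $n\mapsto u_n$. It is needed both to make $T_b$ well defined on $X_h$ and to pass from the equality $u_{F_b(n)}=u_n$ of functions to the equality $F_b(n)=n$ of indices. Lemma~\ref{lem:flow} supplies it immediately; alternatively, one can compare the values at $t=0$. I do not expect any genuine obstacle.
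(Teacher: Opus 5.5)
Your proposal is correct and follows essentially the same route as the paper. Both directions rest on uniqueness for the Cauchy problem \eqref{eq:cauchy-family} and on the equivalence $T_bu_n=u_n\iff F_b(n)=n$; the paper obtains this equivalence from the isometry $d(u_m,u_n)=h|m-n|$, just as you do via injectivity of $n\mapsto u_n$, and your remark that this injectivity is also what makes $T_b$ well defined is a small point the paper leaves implicit.
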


\begin{proof}
We first describe the metric structure of $X_h$. By Lemma~\ref{lem:flow}, we have
\begin{equation}\label{eq:lattice-distance}
\begin{aligned}
d(u_m,u_n)
=d(Y_{a+hm},Y_{a+hn})
=|(a+hm)-(a+hn)|
=h|m-n|,
\qquad m,n\in\mathbb Z.
\end{aligned}
\end{equation}
Thus the correspondence
$
a+hn\longmapsto u_n
$
is an isometry from $a+h\mathbb Z$ onto $X_h$.
Since $a+h\mathbb Z$ is a closed subset of $\mathbb R$,
the metric space $(X_h,d)$ is complete.
Moreover, \eqref{eq:lattice-distance} shows that $d(u_m,u_n)\ge h>0$ whenever $m\ne n$. Hence every point of $X_h$ is isolated.

Using \eqref{eq:lattice-distance}, we obtain
\begin{equation}\label{eq:ode-fixed-distance}
\begin{aligned}
d(T_bu_n,u_n)
=d(u_{F_b(n)},u_n)
=h|F_b(n)-n|.
\end{aligned}
\end{equation}
Since $h>0$, it follows that
\begin{equation}\label{eq:ode-fixed-criterion}
T_bu_n=u_n
\quad\Longleftrightarrow\quad
F_b(n)=n.
\end{equation}
We shall use this equivalence to establish the
correspondence between solutions of
\eqref{eq:lattice-problem} and fixed points of $T_b$.

First, let $u\in C^1([0,\tau],\mathbb R)$ be a solution
of \eqref{eq:lattice-problem}.
By the nonlocal initial condition,
\begin{equation}\label{eq:ode-initial-condition}
u(0)=a+h\left\lfloor
b+\frac{\mathcal H[u]}h
\right\rfloor.
\end{equation}
Therefore,
there exists $n\in\mathbb Z$ such that
\begin{equation}\label{eq:ode-initial-index}
u(0)=a+hn.
\end{equation}
On the other hand, by the definition of $u_n$,
\begin{equation}\label{eq:ode-cauchy-trajectory}
u_n'(t)=f(u_n(t)),
\qquad
u_n(0)=a+hn.
\end{equation}
Since $u$ also satisfies the same differential
equation and, by \eqref{eq:ode-initial-index},
has the same initial value as $u_n$, uniqueness
of the Cauchy problem yields
\begin{equation}\label{eq:ode-solution-identity}
u(t)=u_n(t)
\qquad(0\le t\le\tau).
\end{equation}
In particular, $u\in X_h$.

Substituting \eqref{eq:ode-solution-identity}
into \eqref{eq:ode-initial-condition}
and using \eqref{eq:ode-initial-index}, we get
\[
a+hn
=a+h\left\lfloor
b+\frac{\mathcal H[u_n]}h
\right\rfloor.
\]
Subtracting $a$ and dividing by $h>0$, we obtain
\begin{equation}\label{eq:ode-index-equation}
n=\left\lfloor
b+\frac{\mathcal H[u_n]}h
\right\rfloor
=F_b(n).
\end{equation}
By \eqref{eq:ode-fixed-criterion}, this means that
$T_bu_n=u_n$. Together with
\eqref{eq:ode-solution-identity}, it follows that
$u$ is a fixed point of $T_b$.

Conversely, suppose that $u_n\in X_h$ is a fixed
point of $T_b$. Then
$
T_bu_n=u_n,
$
and \eqref{eq:ode-fixed-criterion} implies
\begin{equation}\label{eq:ode-fixed-index}
F_b(n)=n.
\end{equation}
We verify that $u_n$ solves
\eqref{eq:lattice-problem}.
By its definition as a Cauchy trajectory,
$u_n$ satisfies
\[
u_n'(t)=f(u_n(t)),
\qquad 0\le t\le\tau.
\]
It remains to verify the nonlocal initial condition. Using the definition of $F_b$ and
\eqref{eq:ode-fixed-index}, we have
\begin{equation}\label{eq:ode-boundary-verification}
\begin{aligned}
a+h\left\lfloor
b+\frac{\mathcal H[u_n]}h
\right\rfloor
=a+hF_b(n)
=a+hn
=u_n(0).
\end{aligned}
\end{equation}
Thus $u_n$ also satisfies the nonlocal initial
condition in \eqref{eq:lattice-problem}.
Consequently, $u_n$ is a solution of this problem.

We have proved both implications: every solution
of \eqref{eq:lattice-problem} is a fixed point of
$T_b$, and every fixed point of $T_b$ is a solution
of \eqref{eq:lattice-problem}. Therefore, the two
sets coincide.
\end{proof}

The proofs below repeatedly use several elementary
properties of the floor and ceiling functions.
For convenience, we collect these identities and
inequalities in the following proposition, which
will also be used in the subsequent applications
to integer and difference equations.

\begin{proposition}
\label{prop:rounding}
For $x,y,t\in\mathbb R$, $\eta\ge0$, any positive integer $m$,
and any integer $D\ge2$, the following relations hold:
\begin{align}
0&\le\lfloor x+\eta\rfloor-\lfloor x\rfloor
 \le\lceil\eta\rceil,\label{eq:rounding-increment}\\
\lfloor x\rfloor-\lfloor y\rfloor
 &\le\lceil x-y\rceil,\qquad
 \lfloor x\rfloor-\lfloor y\rfloor<x-y+1,
 \label{eq:rounding-difference}\\
\left\lfloor\frac{\lfloor t\rfloor}{m}\right\rfloor
 &=\left\lfloor\frac tm\right\rfloor,\qquad
 \left\lceil\frac{\lceil t\rceil}{m}\right\rceil
 =\left\lceil\frac tm\right\rceil,
 \label{eq:rounding-nested}\\
\left\lceil\frac D2\right\rceil&\le\frac{2D}{3},\qquad
\left\lceil\frac D3\right\rceil\le\frac D2.
\label{eq:rounding-bounds}
\end{align}
\end{proposition}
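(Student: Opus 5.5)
The plan is to verify each relation directly from the defining inequalities $\lfloor x\rfloor\le x<\lfloor x\rfloor+1$ and $\lceil x\rceil-1<x\le\lceil x\rceil$. We use repeatedly the fact that an integer $k$ satisfies $k<y+1$ if and only if $k\le\lceil y\rceil$, since $\lceil y\rceil$ is the least integer $\ge y$ and $k<y+1$ means $k-1<y$.

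For \eqref{eq:rounding-increment}, monotonicity of the floor gives the left inequality. For the right one, $\lfloor x+\eta\rfloor\le x+\eta<\lfloor x\rfloor+1+\eta$. Hence the integer $\lfloor x+\eta\rfloor-\lfloor x\rfloor$ is strictly less than $\eta+1$, and therefore it is at most $\lceil\eta\rceil$.

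For \eqref{eq:rounding-difference}, write $\lfloor x\rfloor-\lfloor y\rfloor\le x-(y-1)\cdot$ more precisely: from $\lfloor x\rfloor\le x$ and $\lfloor y\rfloor>y-1$ we get $\lfloor x\rfloor-\lfloor y\rfloor<x-y+1$, which is the second inequality. The first inequality then follows because the left side is an integer.

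For \eqref{eq:rounding-nested}, set $k=\lfloor t/m\rfloor$. Then $km\le t$, so $km\le\lfloor t\rfloor$ because $km$ is an integer, and thus $k\le\lfloor t\rfloor/m$. Also $\lfloor t\rfloor/m\le t/m<k+1$. Together these show $\lfloor\lfloor t\rfloor/m\rfloor=k$. The ceiling identity follows either by the symmetric argument or by applying the floor identity to $-t$, using $\lceil s\rceil=-\lfloor -s\rfloor$.

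For \eqref{eq:rounding-bounds}, we split by parity and residues. If $D$ is even, then $\lceil D/2\rceil=D/2\le 2D/3$. If $D$ is odd, then $D\ge3$ and $\lceil D/2\rceil=(D+1)/2\le 2D/3$ is equivalent to $D\ge3$. For the second bound, $\lceil D/3\rceil\le (D+2)/3$, and $(D+2)/3\le D/2$ is equivalent to $D\ge4$. The remaining cases $D=2,3$ are checked directly: both give $1\le D/2$.

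The only slightly delicate point is the integer-ceiling step $k<y+1\Rightarrow k\le\lceil y\rceil$, together with the small-$D$ cases in \eqref{eq:rounding-bounds}. Everything else is routine.
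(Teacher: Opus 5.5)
Your proposal is correct and follows essentially the same route as the paper. The few differences are cosmetic: for \eqref{eq:rounding-increment} you bound the integer difference strictly by $\eta+1$ instead of comparing with $\lfloor x+\lceil\eta\rceil\rfloor$, and for \eqref{eq:rounding-nested} you fix $k=\lfloor t/m\rfloor$ and obtain the ceiling case from $\lceil s\rceil=-\lfloor -s\rfloor$, while the arguments for \eqref{eq:rounding-difference} and \eqref{eq:rounding-bounds} match the paper's; just delete the garbled half-sentence at the start of your treatment of \eqref{eq:rounding-difference}.
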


\begin{proof}
The first inequality follows from monotonicity and
$\lfloor x+\eta\rfloor\le\lfloor x+\lceil\eta\rceil\rfloor
=\lfloor x\rfloor+\lceil\eta\rceil$.

To prove the second line, write
$x=\lfloor x\rfloor+\alpha$ and
$y=\lfloor y\rfloor+\beta$, where
$\alpha,\beta\in[0,1)$. Since
$-1<\alpha-\beta<1$, we obtain
\[
\lfloor x\rfloor-\lfloor y\rfloor
=x-y-(\alpha-\beta)<x-y+1.
\]
As the left-hand side is an integer, it follows that
$\lfloor x\rfloor-\lfloor y\rfloor
\le\lceil x-y\rceil$.

For each integer $j$, the equivalences
$\lfloor t\rfloor\ge mj\iff t\ge mj$ and
$\lceil t\rceil\le mj\iff t\le mj$ prove the two
nested-rounding identities.

Finally, the bound for
$\lceil D/2\rceil$ follows by considering even and odd $D$.
For $D=2,3$, the bound for $\lceil D/3\rceil$ is direct;
for $D\ge4$, use $\lceil D/3\rceil\le(D+2)/3\le D/2$.
\end{proof}

\begin{lemma}
\label{lem:lattice-perimeter}
Suppose that, for some $q\in[0,1/2]$,
\begin{equation}
\label{eq:H-assumption}
0\le\mathcal H[u_m]-\mathcal H[u_n]
\le qh(m-n)
\qquad(n<m,\ n,m\in\mathbb Z).
\end{equation}
Then, for every $b\in\mathbb R$, the mapping
$T_b\colon X_h\to X_h$ has no periodic points of prime
period two and satisfies
\begin{equation}
\label{eq:lattice-perimeter}
P_d(T_bu,T_bv,T_bw)\le\frac23 P_d(u,v,w)
\end{equation}
for all pairwise distinct $u,v,w\in X_h$.
\end{lemma}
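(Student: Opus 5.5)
By \eqref{eq:lattice-distance}, $d(u_m,u_n)=h|m-n|$. Hence $T_b$ corresponds to the integer map $F_b$ acting on $\mathbb Z$ with the metric $|m-n|$, and everything reduces to properties of $F_b$. So the plan is to translate the hypothesis \eqref{eq:H-assumption} into a property of $F_b$ and then verify both claims at the integer level.

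\textbf{Step 1: $F_b$ is nondecreasing and moves points by at most $\lceil q(m-n)\rceil$.} Write $x_n=b+\mathcal H[u_n]/h$. The hypothesis gives, for $n<m$,
\[
0\le x_m-x_n\le q(m-n).
\]
Applying \eqref{eq:rounding-increment} with $x=x_n$ and $\eta=x_m-x_n$ yields
\[
0\le F_b(m)-F_b(n)\le\lceil q(m-n)\rceil\le\left\lceil \frac{m-n}2\right\rceil .
\]

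\textbf{Step 2: no periodic points of prime period two.} Suppose $F_b(n)=m$ and $F_b(m)=n$ with $n\ne m$, say $n<m$. Monotonicity gives $F_b(n)\le F_b(m)$, that is, $m\le n$. This is a contradiction. Since $T_bu_n=u_n$ exactly when $F_b(n)=n$, and likewise for orbits of period two, the same conclusion holds for $T_b$.

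\textbf{Step 3: perimeter estimate.} Take pairwise distinct integers $n<k<m$. On the line the perimeter equals twice the diameter, so $P_d(u_n,u_k,u_m)=2h(m-n)$. Set $D=m-n\ge2$. By monotonicity, $F_b(n)\le F_b(k)\le F_b(m)$, so the image perimeter is
\[
2h\bigl(F_b(m)-F_b(n)\bigr)\le 2h\left\lceil \frac D2\right\rceil .
\]
By \eqref{eq:rounding-bounds}, $\lceil D/2\rceil\le 2D/3$. This gives an image perimeter of at most $\tfrac23\cdot 2hD=\tfrac23P_d$, which is \eqref{eq:lattice-perimeter}.

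I expect Step 3 to be the main obstacle, namely checking that rounding cannot spoil the factor $\tfrac23$ for small $D$. The ceiling bound from \eqref{eq:rounding-bounds} handles exactly this, because $D\ge2$ is automatic for three distinct integers. For example, $D=2$ gives an image diameter of at most $1\le 4/3$, and $D=3$ gives at most $2=2$.
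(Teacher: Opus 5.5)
Your proposal is correct and follows the paper's proof essentially step for step. It uses the same ingredients: monotonicity of $F_b$, the bound $0\le F_b(m)-F_b(n)\le\lceil q(m-n)\rceil$ from \eqref{eq:rounding-increment}, the monotonicity argument excluding two-cycles, and the reduction of the perimeter ratio to $\lceil D/2\rceil/D\le 2/3$ via \eqref{eq:rounding-bounds} with $D\ge2$.
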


\begin{proof}
Fix $b\in\mathbb R$. We first establish an estimate
for the differences of the integer-valued function
$F_b$.
Let $n,m\in\mathbb Z$ with $n<m$. By
\eqref{eq:H-assumption},
\begin{equation}\label{eq:lp-functional}
0\le
\frac{\mathcal H[u_m]-\mathcal H[u_n]}{h}
\le q(m-n).
\end{equation}
The first inequality in \eqref{eq:lp-functional} implies
\[
\frac{\mathcal H[u_n]}{h}+b\leq \frac{\mathcal H[u_m]}{h}+b.
\]
Since the floor function is nondecreasing,~(\ref{fbn}) implies
\begin{equation}\label{eq:lp-monotonicity}
F_b(n)\le F_b(m)
\qquad(n<m).
\end{equation}
Thus $F_b$ is nondecreasing.

 {We now obtain an upper bound for $F_b(m)-F_b(n)$.
Applying \eqref{eq:rounding-increment} with}
\[
x=b+\frac{\mathcal H[u_n]}h,
\qquad
\eta=\frac{\mathcal H[u_m]-\mathcal H[u_n]}h,
\]
and using \eqref{eq:lp-functional}, we obtain
\begin{equation}\label{eq:integer-bound}
\begin{aligned}
0\le F_b(m)-F_b(n)
\le
\left\lceil
\frac{\mathcal H[u_m]-\mathcal H[u_n]}h
\right\rceil
\le\lceil q(m-n)\rceil,
\qquad n<m.
\end{aligned}
\end{equation}

We next show that $T_b$ has no periodic points
of prime period two. Suppose, to the contrary,
that there exist distinct $u_n,u_m\in X_h$ such
that
$T_bu_n=u_m$, $T_bu_m=u_n$.
By the definition of $T_b$, these equalities
are equivalent to
\begin{equation}\label{eq:lp-cycle}
F_b(n)=m,\qquad F_b(m)=n.
\end{equation}
Interchanging $n$ and $m$ if necessary, we may
assume that $n<m$. Then \eqref{eq:lp-monotonicity}
and \eqref{eq:lp-cycle} give
\[
m=F_b(n)\le F_b(m)=n,
\]
contrary to $n<m$. Therefore, $T_b$ has no
periodic points of prime period two.

It remains to establish the perimeter
contraction estimate. Let $u_i,u_j,u_k \in X_h$ be pairwise
distinct. Since the perimeter is symmetric in its arguments, we may assume
that
$
i<j<k$.
Put
\begin{equation}\label{eq:lp-D}
D=k-i.
\end{equation}
Because $i,j,k$ are distinct integers, we have $D\ge2$. By \eqref{eq:lattice-distance}, we obtain
\begin{equation}\label{eq:lp-original-perimeter}
\begin{aligned}
P_d(u_i,u_j,u_k)
&=d(u_i,u_j)+d(u_j,u_k)+d(u_k,u_i)
=h(j-i)+h(k-j)+h(k-i)\\
&=2h(k-i)
=2hD.
\end{aligned}
\end{equation}

We now calculate the perimeter of the image
triangle. By \eqref{eq:lp-monotonicity},
\begin{equation}\label{eq:lp-ordered-images}
F_b(i)\le F_b(j)\le F_b(k).
\end{equation}
Since $T_bu_n=u_{F_b(n)}$, the distance formula
\eqref{eq:lattice-distance} and
\eqref{eq:lp-ordered-images} yield
\begin{equation}\label{eq:lp-image-perimeter}
\begin{aligned}
P_d(T_bu_i,T_bu_j,T_bu_k)
&=d(u_{F_b(i)},u_{F_b(j)})
 +d(u_{F_b(j)},u_{F_b(k)})
 +d(u_{F_b(k)},u_{F_b(i)})\\
&=h\bigl(F_b(j)-F_b(i)\bigr)
+h\bigl(F_b(k)-F_b(j)\bigr)
+h\bigl(F_b(k)-F_b(i)\bigr)\\
&=2h\bigl(F_b(k)-F_b(i)\bigr).
\end{aligned}
\end{equation}
Notice that this equality remains valid even
if two or all three image points coincide.

Dividing \eqref{eq:lp-image-perimeter} by
\eqref{eq:lp-original-perimeter}, and then
applying \eqref{eq:integer-bound} with
$n=i$ and $m=k$, we obtain
\begin{equation}\label{eq:lp-ratio}
\begin{aligned}
\frac{P_d(T_bu_i,T_bu_j,T_bu_k)}
       {P_d(u_i,u_j,u_k)}
=\frac{F_b(k)-F_b(i)}{D}
\le\frac{\lceil qD\rceil}{D}
\le\frac{\lceil D/2\rceil}{D},
\end{aligned}
\end{equation}
where the last inequality follows from
$q\le1/2$.

 {Since $D\ge2$, inequality \eqref{eq:rounding-bounds} yields
$\lceil D/2\rceil/D\le2/3$. Combining this estimate
with \eqref{eq:lp-ratio}, we conclude that}
\[
P_d(T_bu_i,T_bu_j,T_bu_k)
\le\frac23 P_d(u_i,u_j,u_k),
\]
which proves \eqref{eq:lattice-perimeter}.

\end{proof}

\begin{theorem}
\label{thm:lattice}
Suppose that \eqref{eq:H-assumption} holds for some
$q\in[0,1/2]$.
Then, for every $b\in\mathbb R$, problem
\eqref{eq:lattice-problem} has at least one and at most two
solutions. If there are two solutions, their initial values
are $a+hn$ and $a+h(n+1)$ for some $n\in\mathbb Z$.
\end{theorem}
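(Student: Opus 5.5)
The plan is to combine Proposition~\ref{prop:fixed-points}, Lemma~\ref{lem:lattice-perimeter} and Theorem~\ref{t1}. By Proposition~\ref{prop:fixed-points}, the solutions of \eqref{eq:lattice-problem} are exactly the fixed points of $T_b$ on $X_h$. The proof of Proposition~\ref{prop:fixed-points} shows that $(X_h,d)$ is complete, since it is isometric to the closed set $a+h\mathbb Z$, and that $|X_h|=\infty\ge3$. Lemma~\ref{lem:lattice-perimeter} shows that $T_b$ contracts perimeters of triangles with $\alpha=2/3$ and has no periodic points of prime period two. Theorem~\ref{t1} then gives at least one fixed point and at most two, which yields the first claim.

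For the second claim, suppose $u_n$ and $u_m$ with $n<m$ are two distinct fixed points, so $F_b(n)=n$ and $F_b(m)=m$. I will show that $m=n+1$. Assume instead that $m\ge n+2$ and pick any integer $j$ with $n<j<m$. Since $F_b$ is nondecreasing (by \eqref{eq:lp-monotonicity}), we get $n\le F_b(j)\le m$. Now apply the perimeter estimate of Lemma~\ref{lem:lattice-perimeter} to the triple $u_n,u_j,u_m$. With $D=m-n$, the computation \eqref{eq:lp-image-perimeter} gives an image perimeter of $2h(F_b(m)-F_b(n))=2hD$. This equals the original perimeter $2hD$, contradicting the factor $2/3<1$. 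Hence $m=n+1$, so the initial values are $a+hn$ and $a+h(n+1)$.

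An alternative for this step uses \eqref{eq:integer-bound} directly: $m-n=F_b(m)-F_b(n)\le\lceil q(m-n)\rceil\le\lceil (m-n)/2\rceil$, and this forces $m-n\le1$. Either route works. I do not expect a genuine obstacle. The only care needed is to confirm that the hypotheses of Theorem~\ref{t1} (completeness and $|X|\ge3$) are met, both of which were established in the proof of Proposition~\ref{prop:fixed-points}.
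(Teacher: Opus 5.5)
Your proof is correct. The existence and multiplicity part matches the paper's argument: completeness of $X_h$ via the isometry with $a+h\mathbb Z$, then Lemma~\ref{lem:lattice-perimeter}, Theorem~\ref{t1} and Proposition~\ref{prop:fixed-points}. You are also right to check $|X_h|\ge3$, which the paper leaves implicit.

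For adjacency of two solutions, your main argument differs from the paper's. You derive adjacency from perimeter contractivity itself. If $m\ge n+2$, take a triple $u_n,u_j,u_m$ with $n<j<m$. Its image contains both fixed endpoints, so the image perimeter, twice the diameter, cannot be smaller than the original one. The paper argues arithmetically instead. It combines the strict inequality in \eqref{eq:rounding-difference} with \eqref{eq:H-assumption} to get $m-n<\frac{1}{1-q}\le2$. Your alternative via \eqref{eq:integer-bound} is essentially that same computation in ceiling form.

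Your geometric route is more portable. It uses only that the operator contracts perimeters on a set isometric to $h\mathbb Z$. It does not even need monotonicity of $F_b$, since the image triple contains both fixed points. So it carries over unchanged to the PDE and difference-equation examples. The paper's route instead makes the quantitative role of $q$ explicit, through the bound $m-n<1/(1-q)$.
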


\begin{proof}
Fix $b\in\mathbb R$.
By \eqref{eq:lattice-distance}, the metric space $(X_h,d)$
is isometric to $a+h\mathbb Z$ and hence is complete.
Lemma~\ref{lem:lattice-perimeter} shows that $T_b$ contracts
perimeters of triangles on $X_h$ and has no periodic points
of prime period two. Therefore, Theorem~\ref{t1} implies
that $T_b$ has at least one and at most two fixed points.
By Proposition~\ref{prop:fixed-points}, these fixed points
are precisely the solutions of \eqref{eq:lattice-problem}.

Suppose that $u_n$ and $u_m$, where $n<m$, are two distinct
solutions. Then $F_b(n)=n$ and $F_b(m)=m$.
 {Using \eqref{eq:rounding-difference} and
\eqref{eq:H-assumption}, we obtain}
\[
\begin{aligned}
m-n
&=\left\lfloor b+\frac{\mathcal H[u_m]}h\right\rfloor
 -\left\lfloor b+\frac{\mathcal H[u_n]}h\right\rfloor\\
&<\frac{\mathcal H[u_m]-\mathcal H[u_n]}h+1
\le q(m-n)+1.
\end{aligned}
\]
Consequently,
\[
0<m-n<\frac{1}{1-q}\le2.
\]
Since $m-n$ is an integer, $m=n+1$.
Thus the initial values of the two solutions are
$a+hn$ and $a+h(n+1)$.
\end{proof}

The following observation provides a criterion for distinguishing
these applications from those of the Banach contraction principle.

\begin{remark}
\label{lem:nonconstant-shooting}
Let us show that under the hypotheses of Theorem~\ref{thm:lattice}, for each
$b\in\mathbb R$, the mapping $T_b$ is a Banach contraction in
the supremum metric if and only if $F_b$ is constant.
In particular, if
\begin{equation}
\label{eq:further-unbounded}
\mathcal H[u_n]\longrightarrow+\infty
\qquad(n\longrightarrow+\infty),
\end{equation}
then $T_b$ is not a Banach contraction in this metric for any
$b\in\mathbb R$.

Indeed, by \eqref{eq:integer-bound},
\[
F_b(n+1)-F_b(n)\in\{0,1\}\qquad(n\in\mathbb Z).
\]
If $F_b$ is nonconstant, this difference equals $1$ for some
$n$. Using \eqref{eq:lattice-distance}, we obtain
\[
d(T_bu_{n+1},T_bu_n)
=d\bigl(u_{F_b(n+1)},u_{F_b(n)}\bigr)
=h\,|F_b(n+1)-F_b(n)|=h=d(u_{n+1},u_n).
\]
Thus no Banach contraction coefficient less than one is
possible. If $F_b$ is constant, then $T_b$ is constant and is
a Banach contraction with coefficient zero. Finally,
\eqref{eq:further-unbounded} implies $F_b(n)\to+\infty$,
so $F_b$ is nonconstant for every $b$.
\end{remark}

\noindent\textbf{Sufficient conditions on the nonlocal functional.}
Condition~\eqref{eq:H-assumption} concerns only the Cauchy
trajectories belonging to $X_h$. We now give simple sufficient
conditions on $\mathcal H$ that guarantee this inequality. Suppose that $\mathcal H$ is order-preserving with respect to
the pointwise order, that is,
\[
u(t)\le v(t)\quad (0\le t\le\tau)
\quad\Longrightarrow\quad
\mathcal H[u]\le\mathcal H[v],
\]
and satisfies the Lipschitz condition
\begin{equation}
\label{eq:functional-lipschitz}
|\mathcal H[u]-\mathcal H[v]|
\le q\|u-v\|_\infty,
\qquad u,v\in C([0,\tau],\mathbb R),
\end{equation}
for some $q\in[0,1/2]$.

To verify \eqref{eq:H-assumption}, fix two integers $n<m$.
Recall that $u_n=Y_{a+hn}$ and $u_m=Y_{a+hm}$.
Applying \eqref{eq:flow-bounds} with
$r=a+hn$ and $s=a+hm$, we obtain
\[
0<e^{-Lt}h(m-n)
\le u_m(t)-u_n(t)
\le h(m-n),
\qquad 0\le t\le\tau.
\]
In particular, $u_n(t)\le u_m(t)$ for every
$t\in[0,\tau]$. Since $\mathcal H$ is
order-preserving, it follows that
\begin{equation}
\mathcal H[u_n]\le\mathcal H[u_m].
\end{equation}

On the other hand, by \eqref{eq:flow-isometry},
\[
\|u_m-u_n\|_\infty
=|(a+hm)-(a+hn)|
=h(m-n).
\]
Therefore, the Lipschitz condition
\eqref{eq:functional-lipschitz} yields
\[
\begin{aligned}
0
\le\mathcal H[u_m]-\mathcal H[u_n]
=|\mathcal H[u_m]-\mathcal H[u_n]|
\le q\|u_m-u_n\|_\infty
=qh(m-n).
\end{aligned}
\]
This is precisely \eqref{eq:H-assumption}.

Thus, order preservation and the Lipschitz condition
\eqref{eq:functional-lipschitz} on the whole space
$C([0,\tau],\mathbb R)$ are sufficient for applying
Theorem~\ref{thm:lattice}. They are not necessary, however:
the theorem only requires the corresponding inequalities
on the discrete family $X_h$. Example~\ref{ex:signed-multipoint}
illustrates this distinction.

\begin{proposition}
\label{ex:further-nonlocal-data}
Let $\tau>0$, $a\in\mathbb R$, $h>0$, and let
$f\colon\mathbb R\to\mathbb R$ be nonincreasing
and globally Lipschitz. For each $n\in\mathbb Z$,
let $u_n=Y_{a+hn}$ be the corresponding Cauchy
trajectory. The following functionals satisfy
\eqref{eq:H-assumption} under the stated bounds.

\begin{enumerate}
\item[(i)] Let
\[
\mathcal H[u]=\sum_{j=1}^{N}c_j\psi_j(u(t_j)),
\]
where $N\ge1$, $c_j\ge0$, $t_j\in[0,\tau]$, and each
$\psi_j\colon\mathbb R\to\mathbb R$ is nondecreasing and
globally $\gamma_j$-Lipschitz. A sufficient bound is
\[
q_{\mathrm{mp}}=\sum_{j=1}^{N}c_j\gamma_j\le\frac12.
\]

\item[(ii)] Let
\begin{equation}
\label{eq:integral-H}
\mathcal H[u]=\lambda\int_0^\tau K(s)\psi(u(s))\,ds,
\end{equation}
where $\lambda\ge0$, $K\colon[0,\tau]\to[0,\infty)$ is
continuous, and $\psi$ is nondecreasing and globally
$\gamma$-Lipschitz. A sufficient bound is
\[
q_{\mathrm{int}}=\lambda\gamma\int_0^\tau K(s)\,ds
\le\frac12.
\]
\item[(iii)] Let
\[
\mathcal H[u]=\lambda\max_{0\le t\le\tau}
\{\beta(t)+\psi(u(t))\},
\]
where $\lambda\ge0$, $\beta$ is continuous, and $\psi$ is
nondecreasing and globally $\gamma$-Lipschitz. A sufficient
bound is $q_{\mathrm{max}}=\lambda\gamma\le1/2$.
\end{enumerate}
\end{proposition}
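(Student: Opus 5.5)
The plan is to use the sufficient condition established just before the proposition. Condition \eqref{eq:H-assumption} follows once $\mathcal H$ is order-preserving in the pointwise order and satisfies the Lipschitz bound \eqref{eq:functional-lipschitz} on $C([0,\tau],\mathbb R)$ with constant $q\le 1/2$. So for each of (i)--(iii) I will check these two properties with $q=q_{\mathrm{mp}}$, $q_{\mathrm{int}}$, $q_{\mathrm{max}}$ respectively. Since only the trajectories $u_n$ matter, one could instead use directly that $u_n\le u_m$ pointwise and $\|u_m-u_n\|_\infty=h(m-n)$ for $n<m$, from Lemma~\ref{lem:flow}. The global route is cleaner, however, and all three functionals are in fact monotone and Lipschitz on the whole space.

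For (i), suppose $u\le v$ pointwise. Then $u(t_j)\le v(t_j)$, and since each $\psi_j$ is nondecreasing and $c_j\ge0$, each summand is ordered, so $\mathcal H[u]\le\mathcal H[v]$. For the Lipschitz bound,
\[
|\mathcal H[u]-\mathcal H[v]|\le\sum_j c_j\gamma_j|u(t_j)-v(t_j)|\le q_{\mathrm{mp}}\|u-v\|_\infty .
\]

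For (ii), continuity of $K$, $\psi$ and $u$ makes the integral well defined. Monotonicity follows from $K\ge0$, $\lambda\ge0$ and the fact that $\psi$ is nondecreasing. The Lipschitz bound comes from estimating the integrand by $\lambda K(s)\gamma\|u-v\|_\infty$ and integrating.

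For (iii), the maximum exists because $\beta+\psi\circ u$ is continuous on a compact interval. Monotonicity holds because $\beta(t)+\psi(u(t))\le\beta(t)+\psi(v(t))$ for every $t$, and the maximum preserves this inequality. The only step needing a short argument is the Lipschitz bound for the max. I will use the elementary inequality $|\max g-\max k|\le\|g-k\|_\infty$: if $\max g$ is attained at $t_*$, then $\max g-\max k\le g(t_*)-k(t_*)\le\|g-k\|_\infty$, and the other direction follows by symmetry. Applying this with $g=\beta+\psi\circ u$ and $k=\beta+\psi\circ v$ gives
\[
|\mathcal H[u]-\mathcal H[v]|\le\lambda\sup_t|\psi(u(t))-\psi(v(t))|\le\lambda\gamma\|u-v\|_\infty .
\]

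With monotonicity and the Lipschitz bound in hand in each case, the preceding discussion yields \eqref{eq:H-assumption}. I expect no real obstacle. The only points needing care are the max-difference inequality in (iii) and noting that the nonnegativity of $c_j$, $K$ and $\lambda$ is exactly what makes order preservation work.
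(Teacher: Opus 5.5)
Your proposal is correct and follows essentially the paper's route. The paper shows $0\le\mathcal H[v]-\mathcal H[u]\le q\|v-u\|_\infty$ for pointwise ordered $u\le v$ in each case and then applies this to $u_n\le u_m$ via Lemma~\ref{lem:flow}, which is exactly what your monotonicity-plus-Lipschitz checks deliver through the preceding sufficient-condition paragraph. The only difference is cosmetic: you prove the two-sided Lipschitz bound on all of $C([0,\tau],\mathbb R)$, using $|\max g-\max k|\le\|g-k\|_\infty$ in (iii), whereas the paper only needs the ordered-pair sandwich $A\le B\le A+\gamma\delta$.
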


\begin{proof}
We prove that each of the three functionals satisfies
\eqref{eq:H-assumption} with the corresponding constant
$q$. Let $u,v\in C([0,\tau],\mathbb R)$ satisfy
$
u(t)\le v(t)$, $0\le t\le\tau$,
and put
$
\delta=\|v-u\|_\infty
=\max_{0\le t\le\tau}|v(t)-u(t)|$.
We shall show that, in each case,
\begin{equation}\label{eq:fnd-general}
0\le \mathcal H[v]-\mathcal H[u]
\le q\delta,
\end{equation}
where $q$ is the constant specified in the
corresponding part of the proposition.

\smallskip
\noindent\emph{(i)} Since each $\psi_j$ is nondecreasing and
$\gamma_j$-Lipschitz, we have
\[
0\le
\psi_j(v(t_j))-\psi_j(u(t_j))
\le \gamma_j|v(t_j)-u(t_j)|
\le\gamma_j\delta.
\]
Multiplying by $c_j\ge0$ and summing over
$j=1,\ldots,N$, we obtain
\begin{align}
0
\le \mathcal H[v]-\mathcal H[u]
=\sum_{j=1}^N c_j
\bigl(\psi_j(v(t_j))-\psi_j(u(t_j))\bigr)
\le\sum_{j=1}^N c_j\gamma_j\delta
=q_{\mathrm{mp}}\delta.
\label{eq:fnd-multipoint}
\end{align}
Thus \eqref{eq:fnd-general} holds with
$q=q_{\mathrm{mp}}\le1/2$.

\smallskip
\noindent\emph{(ii)} The monotonicity and Lipschitz continuity of
$\psi$ imply that, for every $s\in[0,\tau]$,
\begin{equation}\label{eq:fnd-psi}
0\le\psi(v(s))-\psi(u(s))
\le\gamma|v(s)-u(s)|
\le\gamma\delta.
\end{equation}
Since $\lambda\ge0$ and $K(s)\ge0$, multiplying
\eqref{eq:fnd-psi} by $\lambda K(s)$ and
integrating over $[0,\tau]$ gives
\begin{align}
0
\le\mathcal H[v]-\mathcal H[u]
=\lambda\int_0^\tau K(s)
\bigl(\psi(v(s))-\psi(u(s))\bigr)\,ds
\le\lambda\gamma\delta
\int_0^\tau K(s)\,ds
=q_{\mathrm{int}}\delta.
\label{eq:fnd-integral}
\end{align}
Hence \eqref{eq:fnd-general} holds with
$q=q_{\mathrm{int}}\le1/2$.

\smallskip
\noindent\emph{(iii)} Define
\[
A(t)=\beta(t)+\psi(u(t)),
\qquad
B(t)=\beta(t)+\psi(v(t)).
\]
Since $\psi$ is nondecreasing and
$\gamma$-Lipschitz, we have
\begin{equation}\label{eq:fnd-max-pointwise}
0\le B(t)-A(t)\le\gamma\delta
\qquad(0\le t\le\tau).
\end{equation}
In particular,
\[
A(t)\le B(t)\le A(t)+\gamma\delta
\qquad(0\le t\le\tau).
\]

Taking the maximum over $[0,\tau]$ in
these inequalities, we obtain
\begin{equation}\label{eq:fnd-max-inequality}
\max_{0\le t\le\tau}A(t)
\le\max_{0\le t\le\tau}B(t)
\le\max_{0\le t\le\tau}A(t)+\gamma\delta.
\end{equation}
Here the second inequality follows from the
fact that adding a constant to a function
increases its maximum by the same constant.

Multiplying \eqref{eq:fnd-max-inequality}
by $\lambda\ge0$ and using the definition
of $\mathcal H$, we conclude that
\begin{equation}\label{eq:fnd-maximum}
0\le\mathcal H[v]-\mathcal H[u]
\le\lambda\gamma\delta
=q_{\mathrm{max}}\delta.
\end{equation}
Thus \eqref{eq:fnd-general} holds with
$q=q_{\mathrm{max}}\le1/2$.

We have established \eqref{eq:fnd-general}
for all three functionals, with the corresponding
constants
$q_{\mathrm{mp}}$, $q_{\mathrm{int}}$,
and $q_{\mathrm{max}}$.
It remains to verify that this estimate
implies \eqref{eq:H-assumption} for the
Cauchy trajectories $u_n$.
Fix two integers $n<m$. Recall that
$
u_n=Y_{a+hn}$ and
$u_m=Y_{a+hm}$.
By Lemma~\ref{lem:flow}, in particular
\eqref{eq:flow-bounds}, we have
$
u_n(t)<u_m(t)$, 
$0\le t\le\tau$.
Moreover, by \eqref{eq:flow-isometry},
\begin{equation}\label{eq:fnd-flow-distance}
\|u_m-u_n\|_\infty
=d(u_m,u_n)
=|(a+hm)-(a+hn)|
=h(m-n).
\end{equation}

We may therefore apply
\eqref{eq:fnd-general} with $u=u_n$
and $v=u_m$. Using
\eqref{eq:fnd-flow-distance}, we obtain
\[
0\le\mathcal H[u_m]-\mathcal H[u_n]
\le q\|u_m-u_n\|_\infty
=qh(m-n).
\]
This is precisely condition
\eqref{eq:H-assumption}.
\end{proof}

\begin{example}[A multipoint condition with coefficients of opposite signs]
\label{ex:signed-multipoint}
Consider
\begin{equation}
\label{eq:signed-multipoint-problem}
u'(t)=-u(t),\quad 0\le t\le1,
\qquad
u(0)=\left\lfloor b+u(1/2)-u(1)\right\rfloor,
\end{equation}
where $b\in\mathbb R$. Here $a=0$, $h=1$, and
$\mathcal H[u]=u(1/2)-u(1)$. The trajectories with integer
initial values are
\[
u_n(t)=ne^{-t},\qquad n\in\mathbb Z.
\]
Consequently,
\[
\mathcal H[u_m]-\mathcal H[u_n]=c(m-n),
\qquad
c=e^{-1/2}-e^{-1}.
\]
Writing $x=e^{-1/2}\in(0,1)$, we have $0<c=x(1-x)\le1/4$.
Thus \eqref{eq:H-assumption} holds with $q=c$, and
Theorem~\ref{thm:lattice} yields at least one and at most two
solutions of \eqref{eq:signed-multipoint-problem} for every
$b$. 

For
$
b_*=1-\frac c2,
$
there are exactly two solutions,
$
u_0(t)=0$, and  $u_1(t)=e^{-t}$.
Indeed,
\[
F_{b_*}(0)=\left\lfloor1-\frac c2\right\rfloor=0,
\qquad
F_{b_*}(1)=\left\lfloor1+\frac c2\right\rfloor=1.
\]
Theorem~\ref{thm:lattice} excludes further solutions.
Moreover, $\mathcal H[u_n]=cn\to+\infty$, so
Remark~\ref{lem:nonconstant-shooting} rules out Banach
contractivity in the supremum metric for every $b$.
At $b=b_*$, the two distinct fixed points rule out Banach
contractivity with respect to any metric on $X_1$.

This example also shows that order preservation of
$\mathcal H$ on the ambient function space is not required
by Theorem~\ref{thm:lattice}. For instance,
$v(t)=0\le w(t)=(2t-1)^2$, but
$\mathcal H[v]=0>\mathcal H[w]=-1$. The required monotonicity
and estimate hold on the Cauchy trajectories.

{ 
\noindent\emph{Physical interpretation.}
The equation $u'=-u$ describes first-order decay in normalized units,
as for a radioactive source or the temperature excess of a body under
Newton's law of cooling. Along each trajectory,
\[
\mathcal H[u]=u(1/2)-u(1)=\int_{1/2}^{1}u(s)\,ds.
\]
Thus $\mathcal H[u]$ represents a normalized integrated decay signal;
in the cooling interpretation, the heat released is proportional to
this quantity, with heat capacity supplying the proportionality
factor. The floor rule prescribes the integer loading of the next
independently initialized cycle from the completed cycle's signal.
At $b=b_*$, the loadings $0$ and $1$ both reproduce themselves under
this rule. The radioactive-source interpretation applies to
nonnegative loadings; the mathematical result concerns the whole
lattice.
\par}
\end{example}

\begin{example}[A nonlinear equation with an integral nonlocal condition]
\label{ex:nonlinear-nonlocal}
Let $b\in\mathbb R$. Consider
\begin{equation}
\label{eq:nonlinear-problem}
u'(t)=\frac14(1-\tanh u(t)),\quad 0\le t\le1,
\qquad
u(0)=\left\lfloor b+\frac14
\int_0^1\ln(1+e^{u(s)})\,ds\right\rfloor.
\end{equation}
Set
\[
f(r)=\frac14(1-\tanh r),\qquad
\psi(r)=\ln(1+e^r).
\]
Since
\[
f'(r)=-\frac{1}{4\cosh^2 r},\qquad
\psi'(r)=\frac{e^r}{1+e^r},
\]
$f$ is nonincreasing and globally $1/4$-Lipschitz, while
$\psi$ is increasing and globally $1$-Lipschitz.
The functional
\[
\mathcal H[u]=\frac14\int_0^1\psi(u(s))\,ds
\]
is a special case of \eqref{eq:integral-H} with $\tau=1$, $K=1$,
and $\lambda=1/4$. By Proposition~\ref{ex:further-nonlocal-data},
it satisfies \eqref{eq:H-assumption} with $q=1/4$.
Taking $a=0$ and $h=1$, Theorem~\ref{thm:lattice} shows that
\eqref{eq:nonlinear-problem} has one or two solutions for every
$b$. The corresponding mapping $T_b$ contracts perimeters of
triangles on $X_1=\{u_n=Y_n:n\in\mathbb Z\}$.

We now show that for $b=3/4$, problem
\eqref{eq:nonlinear-problem} has exactly two solutions. In view of Theorem~\ref{thm:lattice}, it suffices to verify that $u_0$ and $u_1$ satisfy the nonlocal initial condition, or, equivalently, that $F_{3/4}(0)=0$, $F_{3/4}(1)=1$. To this end, we first derive bounds on these trajectories and then estimate the corresponding integral terms. Note that
$f(r)>0$ for all $r\in\mathbb R$.
Hence each trajectory $u_n$, $n=0,1$, is increasing,
and therefore $u_n(t)\ge u_n(0)=n\ge0$.
Since $0<f(r)\le1/4$ for $r\ge0$, we obtain
\[
0\le u_n(t)-n
=\int_0^t f(u_n(s))\,ds
\le\frac{t}{4}\le\frac14.
\]
Consequently,
\begin{equation}\label{bounds}
0\le u_0(t)\le\frac14,\qquad
1\le u_1(t)\le\frac54
\qquad(0\le t\le1).
\end{equation}
For every $r\ge0$, the identity
$
\psi(r)=\ln(1+e^r)=r+\ln(1+e^{-r})
$
implies
$
r<\psi(r)\le r+\ln 2$.
Combining these inequalities with bounds~(\ref{bounds})
we obtain
\[
0<\psi(u_0(t))\le\frac14+\ln 2<1,\quad
1<\psi(u_1(t))\le\frac54+\ln 2<2,
\]
where we have used $\ln 2<3/4$.
Integrating over $[0,1]$ and writing
\[
M_n=\int_0^1\psi(u_n(s))\,ds,
\]
we conclude that
$0<M_0<1$, $1<M_1<2$.
Consequently,
\[
\frac34<
\frac34+\frac{M_0}{4}<1,
\qquad
1<
\frac34+\frac{M_1}{4}<\frac54.
\]
Taking integer parts gives
\[
F_{3/4}(0)
=\left\lfloor\frac34+\frac{M_0}{4}\right\rfloor=0,
\qquad
F_{3/4}(1)
=\left\lfloor\frac34+\frac{M_1}{4}\right\rfloor=1.
\]
Thus both trajectories satisfy the nonlocal initial
condition and hence solve~\eqref{eq:nonlinear-problem}.
Theorem~\ref{thm:lattice} excludes any further solutions.  Moreover, since $T_{3/4}$
has two distinct fixed points, it cannot
be a Banach contraction on $X_1$ with
respect to any metric.

 {Finally, $T_b$ is not a Banach contraction in the supremum metric
for any $b$. Indeed, let $n\ge1$. Since $f>0$, the trajectory $u_0$ is
increasing, so $u_0(t)\ge0$. Lemma~\ref{lem:flow} with $L=1/4$ gives
\[
u_n(t)\ge u_0(t)+e^{-t/4}n\ge e^{-1/4}n,\qquad 0\le t\le1.
\]
Since $\psi(r)>r$ for all $r\in\mathbb R$, it follows that
\[
\mathcal H[u_n]\ge\frac{e^{-1/4}}{4}\,n\longrightarrow+\infty
\qquad(n\to+\infty),
\]
and Remark~\ref{lem:nonconstant-shooting} applies.}

{ 
\noindent\emph{Physical interpretation.}
Problem~\eqref{eq:nonlinear-problem} is a stylized thermal process with
separately initialized operating cycles. The variable $u$ is a
dimensionless temperature relative to a reference value, and the
bounded, decreasing rate $f(u)=\frac14(1-\tanh u)$ models a
self-regulating heater. The sensor response
$\psi(u)=\ln(1+e^u)$ is weakly sensitive at low temperatures and
asymptotically linear at high temperatures. A digital controller uses
the time-averaged sensor signal from a completed cycle to prescribe
the next cycle's starting temperature in integer steps. Writing $n_k$
for the integer initial value in cycle $k$ and $u_{n_k}$ for its
trajectory gives
\[
n_{k+1}=\left\lfloor b+\frac14
\int_0^1\psi(u_{n_k}(s))\,ds\right\rfloor=F_b(n_k).
\]
The preparation of this initial temperature is an external operation
between cycles. A solution of the nonlocal problem corresponds to
$n_{k+1}=n_k=n_*$: the prescribed starting temperature reproduces
itself from one cycle to the next.
\par}
\end{example}

\section{Non-trivial examples for nonlinear PDEs}
\label{sec:pde-examples}

The preceding examples concern ordinary differential equations. We now give two PDE examples. As in Section~\ref{sec:general-nonlocal}, the ambient function space has no isolated points, but the nonlocal condition selects a discrete invariant solution set. This is precisely the situation in which Theorem~\ref{t1} is not reduced to Banach's theorem by Proposition~\ref{p1}.

\begin{example}[A nonlinear elliptic equation with a nonlocal boundary condition]
\label{ex:elliptic-pde}
Let $\Omega\subset\mathbb R^d$ be a bounded $C^2$ domain  {(open and connected)} and let $|\Omega|$ denote its Lebesgue measure. Set
\[
f(r)=\frac{1}{4}(1-\tanh r),\qquad
\psi(r)=\ln(1+e^r),
\]
and define
\[
\mathcal H[u]=\frac{1}{4|\Omega|}\int_\Omega \psi(u(x))\,dx.
\]
Consider the semilinear elliptic problem
\begin{equation}\label{eq:elliptic-nonlocal-problem}
\begin{cases}
-\Delta u=f(u), & x\in\Omega,\\[2mm]
u=\left\lfloor b+\mathcal H[u] \right\rfloor, & x\in\partial\Omega,
\end{cases}
\end{equation}
where $b\in\mathbb R$ and the right-hand side of the boundary condition is understood as a constant function on $\partial\Omega$. A solution is understood as a function $u\in C^2(\Omega)\cap C(\overline\Omega)$ satisfying \eqref{eq:elliptic-nonlocal-problem} pointwise.

{ 
\noindent\emph{Physical interpretation.}
Problem~\eqref{eq:elliptic-nonlocal-problem} is a stylized model of the
stationary temperature field in a body $\Omega$ with a distributed,
self-regulating heat source of density $f(u)$. A digital thermostat
sets the surface temperature in integer steps using the volume
average of the nonlinear sensor response $\psi(u)$. The boundary condition expresses
compatibility between the stationary field and the thermostat rule.
As proved below, uniqueness for each integer boundary value and
separation in the supremum metric yield a complete discrete family
of stationary fields. For $b=b_*$, two such fields satisfy the same
feedback rule.
\par}

For each $n\in\mathbb Z$, let $U_n$ be the unique solution of
\begin{equation}\label{eq:elliptic-family}
\begin{cases}
-\Delta U_n=f(U_n), & x\in\Omega,\\[2mm]
U_n=n, & x\in\partial\Omega.
\end{cases}
\end{equation}
{ 
To justify existence, let
$\varphi\in C^2(\Omega)\cap C(\overline\Omega)$ solve
\[
\begin{cases}
-\Delta\varphi=1, & x\in\Omega,\\[2mm]
\varphi=0, & x\in\partial\Omega.
\end{cases}
\]
 {Such $\varphi$ exists and is unique, since a $C^2$ domain satisfies the
exterior sphere condition; see~\cite[Theorem~6.13]{GT}.}
The maximum principle~\cite[Theorem~3.1]{GT} gives $\varphi\ge0$. Since
$0<f(r)<1/2$ for all $r\in\mathbb R$, the functions
$\underline U_n=n$ and $\overline U_n=n+\varphi/2$ satisfy
\[
-\Delta\underline U_n=0\le f(\underline U_n),\qquad
-\Delta\overline U_n=\frac12\ge f(\overline U_n),
\]
and have the same boundary value $n$. Thus they are an ordered
subsolution and supersolution.
 {For monotone iteration, use the shifted operator
$-\Delta+\frac14$. Since $f'(r)\ge-\frac14$, the function
$r\mapsto f(r)+\frac14r$ is nondecreasing. One may therefore iterate
\[
\begin{cases}
(-\Delta+\frac14)V_{j+1}=f(V_j)+\frac14V_j, & x\in\Omega,\\[1mm]
V_{j+1}=n, & x\in\partial\Omega,
\end{cases}
\]
starting from the subsolution or the supersolution.}
The subsolution--supersolution method
and elliptic regularity yield a solution
$U_n\in C^2(\Omega)\cap C(\overline\Omega)$
 {(see~\cite{Amann76} and~\cite[Chapter~3]{Pao92} for the method, and~\cite[Chapters~4 and~6]{GT}
for the Schauder estimates)}, with
\[
n\le U_n(x)\le n+\frac12\varphi(x)
\qquad(x\in\overline\Omega).
\]
For uniqueness, the difference $w$ of two solutions with boundary
value $n$ satisfies $-\Delta w=c(x)w$ with $c\le0$ and
$w=0$ on $\partial\Omega$. The  {weak} maximum principle
 {for the operator $\Delta+c$ with $c\le0$~\cite[Corollary~3.2]{GT}}, applied to
$w$ and $-w$, gives $w=0$.
} If $m>n$ and $w=U_m-U_n$, then $w=m-n$ on $\partial\Omega$ and
\[
-\Delta w=f(U_m)-f(U_n)=c(x)w,
\]
where one may take
\[
c(x)=\int_0^1 f'\bigl(U_n(x)+\theta w(x)\bigr)\,d\theta\le0.
\]
The  {weak maximum principle~\cite[Corollary~3.2]{GT}, applied to $-w$, gives $w\ge0$.
Since $w\not\equiv0$ and $\Omega$ is connected, the strong maximum
principle~\cite[Theorem~3.5]{GT} gives} $w>0$  {in $\Omega$}. Hence $f(U_m)\le f(U_n)$, so $-\Delta w\le0$; therefore $w$ is subharmonic and attains its maximum on $\partial\Omega$. Consequently,
\[
\|U_m-U_n\|_\infty=m-n.
\]
Thus $X=\{U_n:n\in\mathbb Z\}$, equipped with the supremum metric $d(u,v)=\|u-v\|_\infty$, is a complete discrete metric space isometric to $\mathbb Z$.

The functional $\mathcal H$ is order-preserving on the family $\{U_n\}$ and satisfies
\[
|\mathcal H[u] -\mathcal H[v]|
\le \frac{1}{4}\|u-v\|_\infty
\]
because $\psi$ is increasing and $1$-Lipschitz. Hence, for $m>n$,
\[
0\le \mathcal H[U_m]-\mathcal H[U_n]
\le \frac{1}{4}(m-n).
\]
Define
\[
F_b(n)=\left\lfloor b+\mathcal H[U_n]\right\rfloor,
\qquad
T_bU_n=U_{F_b(n)}.
\]
A function solves \eqref{eq:elliptic-nonlocal-problem} if and only if it is one of the $U_n$ and $n=F_b(n)$.

The map $F_b$ is nondecreasing and therefore has no periodic points of prime period two. Moreover, for $i<j<k$, with $D=k-i\ge2$, we have
$
P_d(U_i,U_j,U_k)=2D$,
and
\[
P_d(T_bU_i,T_bU_j,T_bU_k)
=2\bigl(F_b(k)-F_b(i)\bigr)
\le 2\left\lceil \frac{D}{4}\right\rceil
\le D
=\frac{1}{2}P_d(U_i,U_j,U_k).
\]
 {Here the first inequality follows from
\eqref{eq:rounding-increment}, applied with
$x=b+\mathcal H[U_i]$ and
$\eta=\mathcal H[U_k]-\mathcal H[U_i]\le D/4$, while the second
follows from $\lceil D/4\rceil\le\lceil D/3\rceil\le D/2$;
see~\eqref{eq:rounding-bounds}.}
Thus $T_b$ contracts perimeters of triangles on $X$ with coefficient $1/2$. By Theorem~\ref{t1}, problem \eqref{eq:elliptic-nonlocal-problem} has at least one and at most two solutions for every $b$.

There are values of $b$ for which the problem has exactly two solutions. Let
$\delta=\mathcal H[U_1]-\mathcal H[U_0]$.
Since $U_1>U_0$ on $\overline\Omega$ and $\psi$ is strictly increasing, $\delta>0$; by the above Lipschitz estimate, $\delta\le1/4$. Choose
\[
b_*=1-\frac{\mathcal H[U_0] + \mathcal H[U_1]}{2}.
\]
Then
\[
b_*+\mathcal H[U_0]=1-\frac{\delta}{2}\in[7/8,1),
\]
and
\[
b_*+\mathcal H[U_1] =1+\frac{\delta}{2}\in(1,9/8].
\]
Therefore $F_{b_*}(0)=0$ and $F_{b_*}(1)=1$, so $U_0$ and $U_1$ solve \eqref{eq:elliptic-nonlocal-problem}. Theorem~\ref{t1} excludes any further solution.

Finally, $T_b$ is not a Banach contraction in the supremum metric for any $b$. { 
Indeed, the bound $U_n\ge n$ and the monotonicity of $\psi$ give
\[
\mathcal H[U_n]
=\frac{1}{4|\Omega|}\int_\Omega\psi(U_n(x))\,dx
\ge\frac14\psi(n)\longrightarrow+\infty
\qquad(n\longrightarrow+\infty).
\]
} The same criterion as in Remark~\ref{lem:nonconstant-shooting} then shows that $T_b$ cannot be a Banach contraction. At $b=b_*$, the existence of two distinct fixed points rules out Banach contractivity with respect to any metric on $X$.
\end{example}

\begin{example}[A nonlinear heat equation with a nonlocal initial condition]
\label{ex:parabolic-pde}
Let $\Omega\subset\mathbb R^d$ be a bounded $C^2$ domain. Set again
\[
f(r)=\frac{1}{4}(1-\tanh r),\qquad
\psi(r)=\ln(1+e^r),
\]
and define
\[
\mathcal H[u]=\frac{1}{4}\int_0^1 \frac{1}{|\Omega|}
\int_\Omega \psi(u(t,x))\,dx\,dt.
\]
Consider the parabolic problem
\begin{equation}\label{eq:parabolic-nonlocal-problem}
\begin{cases}
\partial_t u=\Delta u+f(u), & 0<t\le1,\ x\in\Omega,\\[2mm]
\partial_\nu u=0, & 0<t\le1,\ x\in\partial\Omega,\\[2mm]
u(0,x)=\left\lfloor b+\mathcal H[u]\right\rfloor, & x\in\overline\Omega,
\end{cases}
\end{equation}
where the initial value is understood as a constant function of $x$. { 
A solution is understood as a function satisfying all three
relations pointwise and belonging to
\[
u\in C\bigl([0,1]\times\overline\Omega\bigr),\qquad
u\in C^{1,2}\bigl([\varepsilon,1]\times\overline\Omega\bigr)
\quad\text{for every }\varepsilon\in(0,1).
\]
Here the derivatives extend continuously to the lateral boundary
on each such time interval, derivatives at $t=1$ are understood
from the left, and $\nu$ is the outward unit normal. In particular,
the normal derivative in the boundary condition is well defined.
}

{ 
\noindent\emph{Physical interpretation.}
Here $u(t,x)$ is the temperature in a thermally insulated body
with the self-regulating heat source $f(u)$ of
Example~\ref{ex:elliptic-pde}, observed over one cycle $0\le t\le1$.
A digital controller uses the space--time average of the nonlinear
sensor response $\psi(u)$ to prescribe a spatially uniform initial
temperature for the next cycle in integer steps. Each cycle is
prepared externally at the prescribed temperature; insulation refers
to the evolution within the cycle. As in
Example~\ref{ex:nonlinear-nonlocal}, the nonlocal condition expresses
consistency of this rule: the initial temperature reproduces itself
from one cycle to the next.
\par}

For each $n\in\mathbb Z$, let $y_n$ be the unique solution of the scalar Cauchy problem
\[
y_n'(t)=f(y_n(t)),\qquad y_n(0)=n,
\]
and set
$
u_n(t,x)=y_n(t)$.
Since  {spatially constant functions satisfy $\Delta u=0$ in $\Omega$
and $\partial_\nu u=0$ on $\partial\Omega$}, each $u_n$ solves the PDE part of \eqref{eq:parabolic-nonlocal-problem}. { 
Conversely, let $u$ be a solution whose initial value is the
integer $n$, and set $w=u-u_n$. The specified regularity and
the homogeneous Neumann condition justify integration by parts
for $t>0$, giving
\[
\begin{aligned}
\frac12\frac{d}{dt}\int_\Omega w(t,x)^2\,dx
&=-\int_\Omega|\nabla w(t,x)|^2\,dx\\
&\quad+\int_\Omega w(t,x)
  \bigl(f(u(t,x))-f(u_n(t,x))\bigr)\,dx
\le0,
\end{aligned}
\]
because $f$ is nonincreasing. For $0<\varepsilon<t\le1$,
integration yields
\[
\int_\Omega w(t,x)^2\,dx
\le\int_\Omega w(\varepsilon,x)^2\,dx.
\]
Letting $\varepsilon\downarrow0$ and using continuity together
with $w(0,x)=0$, we conclude that $w=0$. Hence every solution
with integer initial value coincides with the corresponding $u_n$.
 {For uniqueness of classical solutions of semilinear parabolic problems
with Neumann boundary conditions via comparison principles,
see also~\cite[Chapter~2]{Pao92}; an abstract framework is given
in~\cite{Henry}.}
}

Equip $C([0,1]\times\overline\Omega)$ with the supremum metric
\[
d(u,v)=\max_{0\le t\le1}\max_{x\in\overline\Omega}|u(t,x)-v(t,x)|.
\]
By Lemma~\ref{lem:flow},
$
d(u_m,u_n)=|m-n|$.
Moreover, because the time interval has length one and the spatial integral is normalized,
\[
|\mathcal H[u] -\mathcal H[v] |
\le \frac{1}{4}\|u-v\|_\infty.
\]
For $m>n$, one has $u_m\ge u_n$, and therefore
\[
0\le \mathcal H[u_m] -\mathcal H[u_n]
\le \frac{1}{4}(m-n).
\]
Define
\[
F_b(n)=\left\lfloor b+\mathcal H[u_n]\right\rfloor,
\qquad
T_bu_n=u_{F_b(n)}.
\]
Then $F_b$ is nondecreasing and has no periodic points of prime period two.  {Since $0\le\mathcal H[u_k]-\mathcal H[u_i]\le D/4$ for
$i<k$ and $D=k-i$, the same computation as in
Example~\ref{ex:elliptic-pde} gives}
\[
P_d(T_bu_i,T_bu_j,T_bu_k)
\le \frac{1}{2}P_d(u_i,u_j,u_k)
\]
for all pairwise distinct $u_i,u_j,u_k\in\{u_n:n\in\mathbb Z\}$. By Theorem~\ref{t1}, problem \eqref{eq:parabolic-nonlocal-problem} has at least one and at most two solutions for every $b$.

For $b=3/4$, there are exactly two solutions. Indeed, the functions $u_0$ and $u_1$ are independent of $x$, and their functionals coincide with those in Example~\ref{ex:nonlinear-nonlocal}. Writing
\[
M_n=\int_0^1\psi(y_n(t))\,dt,
\]
the estimates from Example~\ref{ex:nonlinear-nonlocal} give
$0<M_0<1$ and  $1<M_1<2$.
Hence
\[
F_{3/4}(0)=\left\lfloor \frac34+\frac{M_0}{4}\right\rfloor=0,
\qquad
F_{3/4}(1)=\left\lfloor \frac34+\frac{M_1}{4}\right\rfloor=1.
\]
Thus $u_0$ and $u_1$ solve \eqref{eq:parabolic-nonlocal-problem}, and Theorem~\ref{t1} excludes any further solution.

Finally, for $n\ge1$, Lemma~\ref{lem:flow}  {with $L=1/4$, together with
$y_0(t)\ge0$ (since $f>0$),} gives
\[
y_n(t)\ge  {y_0(t)+}e^{-t/4}n\ge e^{-1/4}n,\qquad 0\le t\le1.
\]
Since $\psi(r)>r$ for  {all $r\in\mathbb R$}, it follows that
\[
\mathcal H[u_n]\ge \frac{e^{-1/4}}{4}n\to+\infty.
\]
Therefore Remark~\ref{lem:nonconstant-shooting} shows that $T_b$ is not a Banach contraction in the supremum metric for any $b$. At $b=3/4$, the two distinct fixed points additionally rule out Banach contractivity with respect to any metric on the discrete solution set.
\end{example}

The solutions in the parabolic example are spatially homogeneous. The elliptic example above provides spatially nonconstant solutions.  {In both examples, the fixed point operator contracts perimeters of triangles on a complete discrete invariant set and fails to be a Banach contraction in the specified supremum metric for every $b$. At the parameters for which two solutions exist, the corresponding operator cannot be a Banach contraction with respect to any metric on that set.}

\section{Applications to integer equations}
\label{sec:integer-equations}

Throughout this section, $I\subseteq\Z$ is an interval containing
at least three integers, equipped with $d(m,n)=|m-n|$.
Thus $I$ may be finite, a half-line, or all of $\Z$.
Write
\[
P(a,b,c)=|a-b|+|b-c|+|c-a|.
\]
It is clear that for any three real numbers the following equality holds
\begin{equation}\label{eq:perimeter-diameter}
P(a,b,c)=2\bigl(\max\{a,b,c\}-\min\{a,b,c\}\bigr).
\end{equation}
Notice that a Banach contraction $T\colon I\to I$ must be constant since
the inequality $|T(n+1)-T(n)|\le k<1$ forces
$T(n+1)=T(n)$ for every adjacent pair $\{n,n+1\}$ in $I$.

The following criterion reduces perimeter contractivity to a
condition on triples of consecutive integers.

\begin{theorem}
\label{thm:local}
For a mapping $T\colon I\to I$, the following conditions are equivalent:
\begin{itemize}
\item[$\mathrm{(i)}$] There exists $\alpha\in[0,1)$ such that
\begin{equation}\label{eq:perimeter-contraction}
P(Ta,Tb,Tc)\le\alpha P(a,b,c)
\end{equation}
for all pairwise distinct $a,b,c\in I$.
\item[$\mathrm{(ii)}$] For every $n$ with $n,n+2\in I$,
\begin{equation}\label{eq:local-diameter}
\max\{T(n),T(n+1),T(n+2)\}
-\min\{T(n),T(n+1),T(n+2)\}\le1.
\end{equation}
\end{itemize}
Whenever these conditions hold,
\begin{equation}\label{eq:pair-bound}
|T(i)-T(j)|\le\left\lceil\frac{|i-j|}{2}\right\rceil
\qquad(i,j\in I),
\end{equation}
and $2/3$ is an admissible perimeter contraction coefficient.
\end{theorem}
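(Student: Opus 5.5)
The plan is to prove (i)$\Rightarrow$(ii) directly, then derive the pair bound \eqref{eq:pair-bound} from (ii), and finally obtain (ii)$\Rightarrow$(i) with coefficient $2/3$ from the pair bound, exactly as in the proof of Lemma~\ref{lem:lattice-perimeter}.

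For (i)$\Rightarrow$(ii), take $a=n$, $b=n+1$, $c=n+2$. By \eqref{eq:perimeter-diameter} the perimeter of this triple is $4$. The image perimeter is $2(\max-\min)$ of the three values $T(n),T(n+1),T(n+2)$, so (i) gives $2(\max-\min)\le 4\alpha<4$. Since the spread is an integer, it is at most $1$.

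For the pair bound, assume (ii) and let $i<j$ with $k=j-i$. I would induct on $k$. For $k=1$ and $k=2$ the bound $|T(i)-T(j)|\le 1=\lceil k/2\rceil$ follows from (ii) applied to a consecutive triple inside $I$. For $k=1$ such a triple exists because $I$ is an interval with at least three integers, so either $i-1\in I$ or $j+1\in I$. For $k\ge3$, write $|T(i)-T(j)|\le |T(i)-T(i+2)|+|T(i+2)-T(j)|\le 1+\lceil (k-2)/2\rceil=\lceil k/2\rceil$. No monotonicity is needed here.

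For (ii)$\Rightarrow$(i), take pairwise distinct $a<b<c$ in $I$ and put $D=c-a\ge2$, so that $P(a,b,c)=2D$. The image perimeter is $2(\max-\min)$ over $T(a),T(b),T(c)$, which equals twice the largest of the three pairwise differences. Each pairwise difference is taken between points at distance at most $D$, so by \eqref{eq:pair-bound} each is at most $\lceil D/2\rceil$. Hence the ratio is at most $\lceil D/2\rceil/D\le 2/3$ by \eqref{eq:rounding-bounds}, which gives (i) with $\alpha=2/3$.

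The main obstacle is only bookkeeping: handling the adjacent case $k=1$ near an endpoint of a finite $I$, and remembering that $T$ need not be monotone. For the latter, I use the diameter form \eqref{eq:perimeter-diameter} rather than ordered differences.
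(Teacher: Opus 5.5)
Your proposal is correct and follows essentially the same route as the paper. It proves (i)$\Rightarrow$(ii) via the consecutive triple $n,n+1,n+2$ and integrality, gets the pair bound from (ii) by chaining steps of length two (your induction on $k=j-i$ is the paper's chain $i,i+2,\dots,i+2q,j$ written recursively, and your endpoint check for $k=1$ is the paper's remark that every pair at distance $1$ or $2$ lies in a consecutive triple inside $I$), and proves (ii)$\Rightarrow$(i) via the diameter identity and $\lceil D/2\rceil/D\le 2/3$.
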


\begin{proof}
If $T$ is constant, then the assertions of the theorem are trivial. Throughout the proof assume that $T$ is nonconstant.

$\mathbf{(i)\Rightarrow(ii)}$. Let condition  {$\mathrm{(i)}$} hold. Applying \eqref{eq:perimeter-contraction} to $n,n+1,n+2$ and using~(\ref{eq:perimeter-diameter}) gives
\[
P\bigl(T(n),T(n+1),T(n+2)\bigr)\le {2\alpha((n+2)-n)}= 4\alpha<4.
\]
By \eqref{eq:perimeter-diameter}, 
\[
2(\max\{T(n),T(n+1),T(n+2)\}
-\min\{T(n),T(n+1),T(n+2)\})<4,
\]
\[
\max\{T(n),T(n+1),T(n+2)\}
-\min\{T(n),T(n+1),T(n+2)\}<2.
\]
This is equivalent to \eqref{eq:local-diameter}.

$\mathbf{(ii)}\Rightarrow$(\ref{eq:pair-bound}). Assume that $\mathrm{(ii)}$ holds. First, observe that every pair of integers
$p,q\in I$ satisfying
$1\le |p-q|\le 2$ is contained in some
triple of consecutive integers belonging to $I$.
Indeed, this follows from the fact that $I$
is an interval containing at least three integers.

Therefore, by \eqref{eq:local-diameter},
\begin{equation}\label{eq:local-proof-short}
|T(p)-T(q)|\le 1
\qquad
\text{whenever }1\le |p-q|\le 2.
\end{equation}

We now extend this estimate to arbitrary $i,j\in I$. Let $i<j$ and put $D=j-i$. By the division algorithm, there exist integers
$q\ge 0$ and $r\in\{0,1\}$ such that
\begin{equation}\label{eq:local-proof-division2}
D=2q+r.
\end{equation}
Consider the points
\[
i,\ i+2,\ i+4,\ \ldots,\ i+2q,\ j.
\]
All these points belong to $I$.
The distance between any two consecutive
distinct points in this sequence is either
one or two. Applying the triangle inequality, we get
\begin{align}
|T(i)-T(j)|
&\le
\sum_{k=0}^{q-1}
|T(i+2k)-T(i+2k+2)|
\nonumber\\
&\quad+
|T(i+2q)-T(j)|.
\label{eq:local-proof-chain2}
\end{align}

Each term in the sum in
\eqref{eq:local-proof-chain2} is at most one
by \eqref{eq:local-proof-short}.
Moreover, the last term is zero if $r=0$,
and is at most one if $r=1$.
Thus
\begin{equation}\label{eq:local-proof-bound2}
|T(i)-T(j)|
\le q+r
=
\left\lceil\frac{D}{2}\right\rceil.
\end{equation}

Since $D=j-i$, and the case $i>j$
follows by symmetry, we conclude that \eqref{eq:pair-bound} holds.

$\mathbf{(ii)\Rightarrow(i)}$. Let $a,b,c\in I$ be pairwise distinct.
Without loss of generality, suppose that
$a<b<c$ and put
\begin{equation}\label{eq:local-proof-D}
D=c-a\ge 2.
\end{equation}

Since
\[
|a-b|\le D,\qquad
|b-c|\le D,\qquad
|a-c|=D,
\]
inequality \eqref{eq:pair-bound} gives
\begin{align*}
|Ta-Tb|\le\left\lceil\frac D2\right\rceil,\quad
|Tb-Tc|\le\left\lceil\frac D2\right\rceil,\quad
|Ta-Tc|\le\left\lceil\frac D2\right\rceil.
\end{align*}

For a nonempty finite set $A\subseteq\R$, write $\operatorname{diam} A=\max A-\min A$.
By \eqref{eq:perimeter-diameter}, for every
$a<b<c$ in $I$ we have
\begin{equation}\label{eq:local-proof-ratio}
\frac{P(Ta,Tb,Tc)}{P(a,b,c)}
=
\frac{\operatorname{diam}\{Ta,Tb,Tc\}}{c-a}.
\end{equation}
Consequently,
\begin{equation}\label{eq:local-proof-diam2}
\operatorname{diam}\{Ta,Tb,Tc\}
\le
\left\lceil\frac D2\right\rceil.
\end{equation}
Combining \eqref{eq:local-proof-ratio},
\eqref{eq:local-proof-D}, and
\eqref{eq:local-proof-diam2}, we obtain
\begin{equation}\label{eq:local-proof-mainratio}
\frac{P(Ta,Tb,Tc)}{P(a,b,c)}
\le
\frac{\lceil D/2\rceil}{D}.
\end{equation}
 {Since $D\ge2$, inequalities
\eqref{eq:local-proof-mainratio} and
\eqref{eq:rounding-bounds} yield}
\begin{equation}\label{eq:local-proof-universal}
\frac{P(Ta,Tb,Tc)}{P(a,b,c)}
\le\frac23.
\end{equation}
Since $a,b,c$ were arbitrary pairwise
distinct points of $I$, we have proved
\begin{equation}\label{eq:local-proof-contract}
P(Ta,Tb,Tc)
\le\frac23 P(a,b,c).
\end{equation}
Hence $\mathrm{(i)}$ holds with $\alpha=2/3$.
\end{proof}

The local criterion also yields a quantitative version of finite
convergence to a fixed point.

\begin{proposition}
\label{prop:finite-termination}
Let $T\colon I\to I$ satisfy \eqref{eq:local-diameter}.
Then $T$ has a fixed point if and only if there is no $n,n+1\in I$
such that
\begin{equation}\label{eq:adjacent-cycle}
T(n)=n+1,\qquad T(n+1)=n.
\end{equation}
When these cycles are absent, $T$ has one or two fixed points,
and two distinct fixed points are consecutive integers.
Every orbit $x_{j+1}=T(x_j)$ reaches a fixed point after finitely
many steps. More precisely, if $r=|T(x_0)-x_0|\ge1$, a fixed
point is reached no later than
\begin{equation}\label{eq:integer-termination-bound}
x_{M+2},\qquad M=\lceil\log_2 r\rceil.
\end{equation}
If $r=0$, the initial point is already fixed.
\end{proposition}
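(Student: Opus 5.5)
The plan is to reduce everything to Theorem~\ref{thm:local} and Theorem~\ref{t1}, and then to track the step lengths $r_j=|x_{j+1}-x_j|$ along an orbit. By Theorem~\ref{thm:local}, condition \eqref{eq:local-diameter} makes $T$ a mapping contracting perimeters of triangles with coefficient $2/3$, and it also gives the pair bound \eqref{eq:pair-bound}. The space $(I,|\cdot|)$ is complete, being a closed subset of $\Z$, and $|I|\ge3$. Theorem~\ref{t1} therefore says that $T$ has a fixed point if and only if it has no periodic points of prime period two, and that it has at most two fixed points.

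\textbf{Step 1: two-cycles and pairs of fixed points are adjacent.} Suppose $T(a)=b$ and $T(b)=a$ with $D=|a-b|\ge1$. Then \eqref{eq:pair-bound} gives $D=|T(a)-T(b)|\le\lceil D/2\rceil$, and this forces $D=1$. So the only possible prime-period-two orbits are the adjacent cycles \eqref{eq:adjacent-cycle}, which proves the stated equivalence. The same inequality applied to two distinct fixed points $m\neq n$ gives $|m-n|\le\lceil |m-n|/2\rceil$, so $|m-n|=1$.

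\textbf{Step 2: the step lengths halve.} Assume no adjacent cycles exist. Applying \eqref{eq:pair-bound} to $x_{j+1}$ and $x_j$ gives
\[
r_{j+1}=|T(x_{j+1})-T(x_j)|\le\lceil r_j/2\rceil .
\]
I will iterate this using the monotonicity of the ceiling and the nested identity \eqref{eq:rounding-nested}, which gives $\lceil\lceil t\rceil/2\rceil=\lceil t/2\rceil$. This yields $r_M\le\lceil r/2^M\rceil$, and with $M=\lceil\log_2 r\rceil$ we get $2^M\ge r$, hence $r_M\le1$.

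\textbf{Step 3: the endgame.} This is the step that needs care. Let $M$ be as above. If $r_M=0$, then $x_M$ is already fixed. If $r_M=1$, write $x_{M+1}=x_M+\varepsilon$ with $\varepsilon=\pm1$. By Step 2, $r_{M+1}\le1$.
\begin{itemize}
\item If $r_{M+1}=0$, then $x_{M+1}$ is fixed.
\item If $x_{M+2}=x_M$, then $x_M,x_{M+1}$ form an adjacent cycle, which is excluded.
\item Otherwise $x_{M+2}=x_M+2\varepsilon$. Apply \eqref{eq:local-diameter} to the consecutive triple $x_M,\,x_M+\varepsilon,\,x_M+2\varepsilon$, whose images are $x_M+\varepsilon$, $x_M+2\varepsilon$ and $T(x_{M+2})$. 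The diameter bound $\le1$ forces $T(x_{M+2})\in\{x_M+\varepsilon,\,x_M+2\varepsilon\}$. The first value would make $x_{M+1},x_{M+2}$ an adjacent cycle, so $T(x_{M+2})=x_{M+2}$.
\end{itemize}
In every case a fixed point appears no later than $x_{M+2}$, which is \eqref{eq:integer-termination-bound}.

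Finally, if the bound in Step 2 already reaches $r_j\le1$ at some earlier index, the same endgame applies from that index and only shortens the count. The main obstacle is the boundary case $r_M=1$ in Step 3, where halving no longer makes progress. It is resolved by the local diameter condition on a triple of consecutive integers, combined with the exclusion of adjacent cycles.
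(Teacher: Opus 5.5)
Your proposal is correct and follows essentially the same route as the paper. It applies Theorem~\ref{thm:local} together with Theorem~\ref{t1}, uses the pair bound $|T(i)-T(j)|\le\lceil |i-j|/2\rceil$ to force two-cycles and pairs of fixed points to be adjacent, iterates $r_{j+1}\le\lceil r_j/2\rceil$ via the nested-ceiling identity, and runs the same endgame at index $M$ with the local diameter condition on a consecutive triple (your closing remark about reaching $r_j\le1$ earlier is unnecessary, since the case analysis at index $M$ already covers it, and the case $r=0$ is trivial).
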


\begin{proof}
Since $I\subseteq\mathbb Z$ is complete in the usual metric,
and Theorem~\ref{thm:local} shows that $T$ contracts perimeters
of triangles, Theorem~\ref{t1} is applicable.

We first prove the asserted fixed point criterion. Suppose that
$a,b\in I$ form a cycle of prime period two, that is,
$T(a)=b$, $T(b)=a$, and $a\ne b$.
Putting $d=|a-b|\ge1$, we obtain from \eqref{eq:pair-bound}
\begin{equation}\label{eq:ft-cycle}
d=|T(a)-T(b)|\le\left\lceil\frac d2\right\rceil.
\end{equation}
If $d\ge2$, then $\lceil d/2\rceil\le d-1$, which
contradicts \eqref{eq:ft-cycle}. Hence $d=1$, so $a$ and $b$
are consecutive integers. Writing $n=\min\{a,b\}$, we see
that the cycle has precisely the form \eqref{eq:adjacent-cycle}.
Conversely, \eqref{eq:adjacent-cycle} clearly defines a cycle
of prime period two. Thus $T$ has no periodic points of prime
period two if and only if \eqref{eq:adjacent-cycle} does not
hold for any $n,n+1\in I$. By Theorem~\ref{t1}, this is
equivalent to the existence of a fixed point.

Assume henceforth that no cycle of the form
\eqref{eq:adjacent-cycle} exists. By Theorem~\ref{t1},
$T$ has at least one and at most two fixed points.
If $p\ne q$ are fixed points, then, with $d=|p-q|$,
\eqref{eq:pair-bound} yields
\begin{equation}\label{eq:ft-fixed}
d=|T(p)-T(q)|\le\left\lceil\frac d2\right\rceil.
\end{equation}
As in \eqref{eq:ft-cycle}, this inequality forces $d=1$.
Therefore, $T$ has one or two fixed points, and any two
distinct fixed points are consecutive integers.

It remains to prove finite termination and the stated bound.
Let $x_0\in I$ and define $x_{j+1}=T(x_j)$ for $j\ge0$.
First suppose that $r=|T(x_0)-x_0|\ge1$, and put
\[
r_j=|x_{j+1}-x_j|\qquad(j\ge0),
\]
so that $r_0=r$. Applying \eqref{eq:pair-bound} to
$x_j$ and $x_{j+1}$, we obtain
\begin{align}
 r_{j+1}
 =|x_{j+2}-x_{j+1}|
   =|T(x_{j+1})-T(x_j)|
 \le\left\lceil\frac{|x_{j+1}-x_j|}{2}\right\rceil
   =\left\lceil\frac{r_j}{2}\right\rceil.
 \label{eq:ft-recursion}
\end{align}
We claim that
\begin{equation}\label{eq:ft-decay}
r_j\le\left\lceil\frac{r}{2^j}\right\rceil
\qquad(j\ge0).
\end{equation}
For $j=0$, this is an equality. If it holds for some $j$,
then \eqref{eq:ft-recursion} and the monotonicity of the
ceiling function give
\begin{align*}
r_{j+1}
\le\left\lceil\frac{r_j}{2}\right\rceil
 \le\left\lceil\frac12
       \left\lceil\frac{r}{2^j}\right\rceil\right\rceil
=\left\lceil\frac{r}{2^{j+1}}\right\rceil.
\end{align*}
 {The final equality follows from the nested-ceiling identity
\eqref{eq:rounding-nested} with $m=2$.}
This proves \eqref{eq:ft-decay} by induction.

Set $M=\lceil\log_2 r\rceil$. Since $2^M\ge r$,
\eqref{eq:ft-decay} implies
\begin{equation}\label{eq:ft-unit}
0\le r_M\le\left\lceil\frac{r}{2^M}\right\rceil\le1.
\end{equation}
If $r_M=0$, then $x_{M+1}=x_M$, so $x_M$ is a fixed point.
If $r_M=1$, then \eqref{eq:ft-recursion} gives
$r_{M+1}\le1$. When $r_{M+1}=0$, the point $x_{M+1}$
is fixed. Thus we only need to consider the case
\begin{equation}\label{eq:ft-two-steps}
r_M=r_{M+1}=1.
\end{equation}

By \eqref{eq:ft-two-steps}, both
$|x_{M+1}-x_M|$ and $|x_{M+2}-x_{M+1}|$ equal one.
If these two steps had opposite directions, we would have
$x_{M+2}=x_M$, and hence
\[
T(x_M)=x_{M+1},\qquad T(x_{M+1})=x_M,
\]
a forbidden cycle of prime period two.
Therefore $x_M,x_{M+1},x_{M+2}$ are three consecutive
integers in either increasing or decreasing order.
We may consequently apply \eqref{eq:local-diameter} to
these three points. Since
\[
T(x_M)=x_{M+1},\qquad T(x_{M+1})=x_{M+2},
\]
we obtain
\begin{equation}\label{eq:ft-final-diameter}
\max\{x_{M+1},x_{M+2},T(x_{M+2})\}
-\min\{x_{M+1},x_{M+2},T(x_{M+2})\}\le1.
\end{equation}
Now $x_{M+1}$ and $x_{M+2}$ are distinct integers at
distance one by \eqref{eq:ft-two-steps}.
Thus \eqref{eq:ft-final-diameter}, together with the fact
that $T(x_{M+2})\in\mathbb Z$, implies
\begin{equation}\label{eq:ft-final-options}
T(x_{M+2})\in\{x_{M+1},x_{M+2}\}.
\end{equation}
The first alternative in \eqref{eq:ft-final-options} is
impossible, since $T(x_{M+1})=x_{M+2}$ and
$x_{M+1}\ne x_{M+2}$ would produce a two-cycle.
Consequently $T(x_{M+2})=x_{M+2}$.

In all cases a fixed point is reached at one of
$x_M$, $x_{M+1}$, or $x_{M+2}$. Hence every orbit terminates
at a fixed point no later than $x_{M+2}$, as claimed
in \eqref{eq:integer-termination-bound}.

Finally, if $r=0$, then $T(x_0)=x_0$, so the initial point
is itself a fixed point. This completes the proof.
\end{proof}

We next apply the local criterion to integer equations
of the form $x=\lfloor f(x)\rfloor$. For nondecreasing
functions $f$, perimeter contractivity can be
characterized directly in terms of the rounded values
of $f$.

\begin{theorem}
\label{thm:integer}
Let $f\colon I\to\R$ be nondecreasing and suppose that
$T(n)=\lfloor f(n)\rfloor$ maps $I$ into $I$.
Then $T$ contracts perimeters of triangles if and only if
\begin{equation}\label{eq:rounded-two-step}
\lfloor f(n+2)\rfloor-\lfloor f(n)\rfloor\le1
\qquad(n,n+2\in I).
\end{equation}
Whenever \eqref{eq:rounded-two-step} holds, the equation
\begin{equation}\label{eq:integer-equation}
x=\lfloor f(x)\rfloor,\qquad x\in I,
\end{equation}
has one or two solutions, and two distinct solutions are consecutive.
\end{theorem}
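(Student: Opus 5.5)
The plan is to reduce everything to Theorem~\ref{thm:local} and Proposition~\ref{prop:finite-termination}. The first step is to record that $T(n)=\lfloor f(n)\rfloor$ is nondecreasing on $I$, because $f$ is nondecreasing and the floor function is monotone. Consequently, for every $n$ with $n,n+2\in I$,
\[
\max\{T(n),T(n+1),T(n+2)\}-\min\{T(n),T(n+1),T(n+2)\}=T(n+2)-T(n)=\lfloor f(n+2)\rfloor-\lfloor f(n)\rfloor .
\]
Thus condition \eqref{eq:local-diameter} of Theorem~\ref{thm:local}(ii) is literally the same as \eqref{eq:rounded-two-step}.

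The equivalence then follows directly from Theorem~\ref{thm:local}. That theorem says $T$ contracts perimeters of triangles (condition (i)) if and only if \eqref{eq:local-diameter} holds, so $T$ contracts perimeters exactly when \eqref{eq:rounded-two-step} holds. The theorem also shows that $\alpha=2/3$ is an admissible coefficient in this case.

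For the second assertion, assume \eqref{eq:rounded-two-step}. I would invoke Proposition~\ref{prop:finite-termination}, whose hypothesis \eqref{eq:local-diameter} we now have. It remains only to rule out the adjacent cycles \eqref{eq:adjacent-cycle}.

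\begin{itemize}
\item Suppose $T(n)=n+1$ and $T(n+1)=n$ for some $n,n+1\in I$.
\item Then $T(n)>T(n+1)$, which contradicts the monotonicity of $T$.
\item Hence no such cycle exists.
\end{itemize}

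Proposition~\ref{prop:finite-termination} (equivalently Theorem~\ref{t1} together with the bound \eqref{eq:pair-bound}) then shows that $T$ has one or two fixed points, and that two distinct fixed points are consecutive integers. These fixed points are precisely the solutions of \eqref{eq:integer-equation}.

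There is no real obstacle here. The only points needing care are that the identity between the local diameter and the two-step difference uses monotonicity of $T$, and that $I$ must contain three integers so that Theorem~\ref{thm:local} applies. Both are guaranteed by the standing assumptions of this section.
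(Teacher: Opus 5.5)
Your proof is correct and follows the paper's argument: monotonicity of $T$ turns the local diameter condition of Theorem~\ref{thm:local} into $\lfloor f(n+2)\rfloor-\lfloor f(n)\rfloor\le1$, and monotonicity also rules out two-cycles, which gives the fixed-point count and the adjacency of two solutions. The only cosmetic difference is that you exclude just the adjacent cycles required by Proposition~\ref{prop:finite-termination}. The paper instead excludes all prime-period-two cycles and invokes Theorem~\ref{t1} directly, using that proposition only for consecutiveness.
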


\begin{proof}
Since $f$ is nondecreasing and the floor function
preserves order, $T$ is also nondecreasing. Thus, for
every $n,n+2\in I$,
\[
T(n)\le T(n+1)\le T(n+2),
\]
and consequently
\begin{align}
&\max\{T(n),T(n+1),T(n+2)\}
-\min\{T(n),T(n+1),T(n+2)\}
\nonumber\\
&\qquad=T(n+2)-T(n)
=\lfloor f(n+2)\rfloor-\lfloor f(n)\rfloor.
\label{eq:integer-proof-diameter}
\end{align}
By Theorem~\ref{thm:local}, $T$ contracts perimeters
of triangles if and only if the left-hand side of
\eqref{eq:integer-proof-diameter} does not exceed one.
In view of \eqref{eq:integer-proof-diameter}, this is
equivalent to \eqref{eq:rounded-two-step}.

Now suppose that \eqref{eq:rounded-two-step} holds.
Since $T$ is nondecreasing, it cannot have a cycle
of prime period two. Indeed, if $a<b$ and
$T(a)=b$, $T(b)=a$, then $b=T(a)\le T(b)=a$, which is impossible.

Therefore, Theorem~\ref{t1} guarantees that $T$
has at least one and at most two fixed points.
Moreover, by Proposition~\ref{prop:finite-termination},
any two distinct fixed points must be consecutive
integers. Since
$T(x)=x$
$\Longleftrightarrow$
$x=\lfloor f(x)\rfloor$,
the same conclusions hold for the solutions of
\eqref{eq:integer-equation}.
\end{proof}

\begin{example}[A nonlinear equation]\label{ex:nonlinear}
Consider the integer equation
\[
x=\left\lfloor\frac{x}{3}
+\frac{1}{12}\sin x\right\rfloor,
\qquad x\in\Z,
\]
and put
\[
f(t)=\frac{t}{3}+\frac{1}{12}\sin t.
\]
Since
\[
\frac14\le f'(t)
=\frac13+\frac1{12}\cos t
\le\frac5{12}
\qquad(t\in\R),
\]
the function $f$ is strictly increasing.

By the mean value theorem, for every $n\in\Z$
there exists $\xi_n\in(n,n+2)$ such that
\[
f(n+2)-f(n)=2f'(\xi_n).
\]
Using the above bounds on $f'$, we obtain
\begin{equation}\label{eq:nonlinear-two-step}
0<\frac12\le f(n+2)-f(n)
\le\frac56<1.
\end{equation}
 {Applying \eqref{eq:rounding-increment} with
$x=f(n)$ and $\eta=f(n+2)-f(n)\in(0,1)$, we obtain}
\[
0\le\lfloor f(n+2)\rfloor
-\lfloor f(n)\rfloor\le1
\qquad(n\in\Z).
\]
Thus condition~\eqref{eq:rounded-two-step}
is satisfied, and Theorem~\ref{thm:integer}
guarantees that the equation has at least one
and at most two integer solutions.

Finally, $f(0)=0$, while
\[
-1<-\frac5{12}
<f(-1)=-\frac13-\frac{\sin1}{12}<0.
\]
Hence
\[
\lfloor f(0)\rfloor=0,
\qquad
\lfloor f(-1)\rfloor=-1.
\]
Therefore, $0$ and $-1$ are solutions.
Since Theorem~\ref{thm:integer} allows at most
two solutions, these are the only ones.

{ 
\noindent\emph{Interpretation.}
The real map $t\mapsto f(t)$ is a contraction with the unique fixed point
$t=0$ and contraction coefficient $5/12$. The iteration
$x_{k+1}=\lfloor f(x_k)\rfloor$ implements it on the integer grid by
rounding every result toward $-\infty$, as in an idealized arithmetic
implementation or spatial discretization~\cite{DK93}. This rounding
creates the additional equilibrium $-1$, illustrating a dead-band
effect familiar from digital recursive filters~\cite{OS10}.
Theorem~\ref{thm:integer} shows that under
condition~\eqref{eq:rounded-two-step} there are at most two grid
equilibria, and they are adjacent. Here they are $-1$ and the original
equilibrium $0$.
\par}
\end{example}

The dynamics of the mappings
$x\mapsto\lfloor qx\rfloor$ and
$x\mapsto\lfloor b+qx\rfloor$
have been studied in~\cite{RSU16} and~\cite{H25},
respectively. The following proposition determines
the exact parameter range for which the latter
mappings contract perimeters of triangles and
provides an explicit description of their
integer fixed points.

\begin{proposition}
\label{thm:quantized-perimeter}
Let $b\in\R$, $q\ge0$, and define
\[
T(n)=\lfloor b+qn\rfloor,\qquad n\in\Z.
\]
Then $T$ contracts perimeters of triangles in the usual metric
if and only if $0\le q\le1/2$. 
For $q>0$, $T$ is not a Banach contraction in the usual metric.

For $0\le q\le1/2$, all solutions of
\begin{equation}\label{eq:affine-equation}
x=\lfloor b+qx\rfloor,\qquad x\in\Z,
\end{equation}
are exactly the integers satisfying
\begin{equation}\label{eq:affine-solutions}
\frac{b-1}{1-q}<x\le\frac b{1-q}.
\end{equation}
Their number is
\begin{equation}\label{eq:affine-count}
N(b,q)=\left\lfloor\frac b{1-q}\right\rfloor
-\left\lfloor\frac{b-1}{1-q}\right\rfloor\in\{1,2\}.
\end{equation}
\end{proposition}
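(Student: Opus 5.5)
The plan is to reduce everything to Theorem~\ref{thm:integer} with $I=\Z$ and $f(t)=b+qt$. Since $q\ge0$, $f$ is nondecreasing, so $T$ contracts perimeters of triangles exactly when $\lfloor b+qn+2q\rfloor-\lfloor b+qn\rfloor\le1$ for all $n\in\Z$.

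For sufficiency, assume $q\le1/2$. Then $2q\le1$, and \eqref{eq:rounding-increment} with $\eta=2q$ bounds the difference by $\lceil 2q\rceil\le1$.

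For necessity, assume $q>1/2$, so $2q>1$. I need some $n$ with $\lfloor x_n+2q\rfloor-\lfloor x_n\rfloor\ge2$, where $x_n=b+qn$. If $2q\ge2$, this holds for every $n$. If $1<2q<2$, the condition is that the fractional part $\{x_n\}$ is at least $2-2q$. I treat this as the main obstacle and split into two cases.
\begin{itemize}
\item \emph{$q$ irrational.} The values $\{b+qn\}$ are dense in $[0,1)$, so some $n$ works.
\item \emph{$q=p/s$ rational.} The fractional parts $\{b+qn\}$ range over a coset $\{\beta+k/s\}$, where $\beta\in[0,1/s)$. The largest of these is at least $1-1/s$, and I need $1-1/s\ge 2-2q$, i.e.\ $2q\ge 1+1/s$. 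This follows from $2p>s$ in integers, which gives $2p\ge s+1$.
\end{itemize}
Alternatively, I could avoid density in the irrational case by a pigeonhole or Dirichlet argument on consecutive steps. In either case some triple $n,n+1,n+2$ has image diameter $2$, so Theorem~\ref{thm:local}(ii) fails.

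For the non-Banach claim with $q>0$, the remark before Theorem~\ref{thm:local} shows that a Banach contraction on $\Z$ must be constant. But $T(n)\to+\infty$ as $n\to+\infty$, so $T$ is nonconstant.

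For the solution set, $x=\lfloor b+qx\rfloor$ with $x\in\Z$ is equivalent to $x\le b+qx<x+1$, that is, $(1-q)x\le b$ and $(1-q)x>b-1$. Since $1-q\ge1/2>0$, dividing gives \eqref{eq:affine-solutions}. The integers in a half-open interval $(\alpha,\beta]$ number $\lfloor\beta\rfloor-\lfloor\alpha\rfloor$, which gives \eqref{eq:affine-count}.

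Finally, $N\in\{1,2\}$ follows from Theorem~\ref{thm:integer}, because $T$ is nondecreasing and so has no $2$-cycles. A direct check also works: the interval has length $1/(1-q)\in[1,2]$, so it contains at least one and at most two integers (at most two because it is half-open of length at most $2$).
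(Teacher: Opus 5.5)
Your proof is correct. The sufficiency direction, the non-Banach claim (nonconstancy on $\Z$) and the description of the solution set (via $x\le b+qx<x+1$ and the half-open interval of length $1/(1-q)\in[1,2]$) essentially match the paper. The route differs only in proving that $q\le 1/2$ is necessary.

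\emph{Your argument.} You argue pointwise. For $1/2<q<1$ you reduce the failure of the local condition to finding $n$ with $\{b+qn\}\ge 2-2q$. You then split into two cases: irrational $q$, using density of $\{b+qn\}$, and rational $q=p/s$, where the fractional parts form a coset of $\frac1s\Z$ modulo $1$ whose top element is at least $1-1/s\ge 2-2q$. This works, but you should state that $p/s$ is in lowest terms. Otherwise the coset claim is false: for $q=4/6$ the values $\{qn\}$ are only $0,1/3,2/3$. A small imprecision: the image diameter is \emph{at least} $2$, not exactly $2$.

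\emph{The paper's argument.} The paper uses a global averaging argument instead. Summing $T(2j+2)-T(2j)\le1$ over $j=0,\dots,m-1$ gives $T(2m)-T(0)\le m$. The strict inequality in \eqref{eq:rounding-difference} gives $T(2m)-T(0)>2qm-1$. Hence $(2q-1)m<1$ for every $m\ge1$, which forces $q\le 1/2$.

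\emph{Comparison.} The paper's route is shorter and needs neither a rational/irrational case split nor a Kronecker-type density fact. Your route gives more information: for $1/2<q<1$ it identifies exactly which triples $n,n+1,n+2$ violate the local diameter condition, namely those with $\{b+qn\}\ge 2-2q$. It also shows that for $q\ge1$ every such triple does.
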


\begin{proof}
First, suppose that $0\le q\le 1/2$. Since $q\ge0$, the mapping
$T$ is nondecreasing.  {Applying \eqref{eq:rounding-increment}
with $x=b+qn$ and $\eta=2q\in[0,1]$, we obtain}
\begin{equation}\label{eq:affine-proof-two-step}
0\le T(n+2)-T(n)\le1
\qquad(n\in\Z).
\end{equation}
In particular, $T(n)\le T(n+1)\le T(n+2)$, so
\begin{align*}
&\max\{T(n),T(n+1),T(n+2)\}
 -\min\{T(n),T(n+1),T(n+2)\}\\
&\qquad=T(n+2)-T(n)\le1.
\end{align*}
Thus condition~\eqref{eq:local-diameter} holds, and
Theorem~\ref{thm:local} shows that $T$ contracts the perimeters
of triangles.

Conversely, suppose that $T$ contracts the perimeters of triangles.
Since $T$ is nondecreasing, Theorem~\ref{thm:local} gives
\eqref{eq:affine-proof-two-step}. For any positive integer $m$,
summing this inequality over $n=0,2,\ldots,2m-2$ yields
\begin{align}
T(2m)-T(0)
=\sum_{j=0}^{m-1}\bigl(T(2j+2)-T(2j)\bigr)
\le m.
\label{eq:affine-proof-telescope}
\end{align}
 {On the other hand, the strict inequality in
\eqref{eq:rounding-difference}, applied with $x=b$ and
$y=b+2qm$, yields}
\begin{equation}\label{eq:affine-proof-lower}
T(2m)-T(0)
=\lfloor b+2qm\rfloor-\lfloor b\rfloor
>2qm-1.
\end{equation}
Combining \eqref{eq:affine-proof-telescope} and
\eqref{eq:affine-proof-lower}, we obtain
\begin{equation}\label{eq:affine-proof-qbound}
(2q-1)m<1
\qquad(m=1,2,\ldots).
\end{equation}
If $q>1/2$, the left-hand side of
\eqref{eq:affine-proof-qbound} becomes arbitrarily large as
$m\to\infty$, a contradiction. Therefore $q\le1/2$, which
completes the proof of the first assertion.

Next, let $q>0$. Since
\[
T(n)=\lfloor b+qn\rfloor>b+qn-1\longrightarrow+\infty
\qquad(n\to+\infty),
\]
the mapping $T$ is nonconstant. As observed at the beginning of
this section, a Banach contraction on $\Z$ with the usual metric
must be constant. Hence $T$ is not a Banach contraction.

Finally, assume $0\le q\le1/2$ and consider
\eqref{eq:affine-equation}. For an integer $x$, the defining
property of the floor function gives
\begin{equation}\label{eq:affine-proof-floor}
x=\lfloor b+qx\rfloor
\quad\Longleftrightarrow\quad
x\le b+qx<x+1.
\end{equation}
Since $1-q>0$, rearranging the inequalities in
\eqref{eq:affine-proof-floor} gives
\[
\frac{b-1}{1-q}<x\le\frac{b}{1-q},
\]
which proves \eqref{eq:affine-solutions}.

Put
\[
A=\frac{b-1}{1-q},\qquad B=\frac{b}{1-q}.
\]
The solutions are precisely the integers in $(A,B]$. Therefore,
by the elementary counting formula for a half-open interval,
\begin{equation}\label{eq:affine-proof-counting}
N(b,q)=\operatorname{card}\bigl((A,B]\cap\Z\bigr)
=\lfloor B\rfloor-\lfloor A\rfloor,
\end{equation}
which is exactly \eqref{eq:affine-count}.

It remains to justify that this number is either one or two.
Since $0\le q\le1/2$,
\begin{equation}\label{eq:affine-proof-length}
1\le B-A=\frac{1}{1-q}\le2.
\end{equation}
The interval $(A,B]$ contains at least one integer: indeed,
$m=\lfloor B\rfloor$ satisfies $m\le B$ and
\[
m>B-1\ge A,
\]
where the last inequality follows from
\eqref{eq:affine-proof-length}.
It cannot contain three distinct integers, since their smallest
and largest elements would differ by at least two, forcing
$B-A>2$, contrary to \eqref{eq:affine-proof-length}.
Consequently $N(b,q)\in\{1,2\}$, as claimed.
\end{proof}

{ 
\begin{remark}[Interpretation]
The map $n\mapsto\lfloor b+qn\rfloor$ is a first-order recursive filter
$x_{k+1}=qx_k+b$, with feedback coefficient $q$ and constant input $b$,
implemented on the integer grid by rounding toward $-\infty$.
Equivalently, it is an idealized linear filter with a uniform quantizer
of unit step in the loop~\cite{Delchamps90,OS10}.
For $0\le q\le1/2$, Proposition~\ref{thm:quantized-perimeter} describes
its stationary output levels completely. The steady state for the input $b$ is unique
unless $N(b,q)=2$, in which case two adjacent output levels are both
stationary. By Proposition~\ref{prop:finite-termination}, a trajectory
starting at $x_0$ with $r=|T(x_0)-x_0|\ge1$ reaches one of them in at most
$\lceil\log_2 r\rceil+2$ steps.
\end{remark}
\par}

\section{Applications to difference equations}\label{sec5}
Throughout this section, we write
\[
E=\ell_\infty(\mathbb Z)
=\left\{x=(x_n)_{n\in\mathbb Z}:
\|x\|_\infty:=\sup_{n\in\mathbb Z}|x_n|<\infty\right\}.
\]
Every subset of $E$ is equipped with the induced metric
$d(x,y)=\|x-y\|_\infty$, and we set
\[
P(x,y,z)=d(x,y)+d(y,z)+d(z,x).
\]

For the difference equation
\[
x_{n+1}=\frac13x_n-\frac43(-1)^n,
\qquad n\in\mathbb Z,
\]
both Theorem~\ref{t1} and Banach's fixed point theorem apply. Indeed, the associated operator on $\ell_\infty(\mathbb Z)$ is
\[
(\mathcal T x)_n=\frac13x_{n-1}+\frac43(-1)^n.
\]
For any bounded sequences $x,y$,
\begin{equation}\label{eq:pc-linear-contraction}
\begin{aligned}
d(\mathcal T x,\mathcal T y)=\|\mathcal T x-\mathcal T y\|_\infty
=\frac13\sup_{n\in\mathbb Z}|x_{n-1}-y_{n-1}|
=\frac13\|x-y\|_\infty=\frac13d(x,y).
\end{aligned}
\end{equation}
By~(\ref{eq:pc-linear-contraction}), $\mathcal T$ is a Banach contraction with contraction
coefficient $1/3$. The unique bounded two-sided solution is $x_n=(-1)^n,\qquad n\in\mathbb Z$.

In the following modified example on a complete invariant subset of $\ell_\infty(\mathbb Z)$, the associated
operator satisfies the hypotheses of Theorem~\ref{t1}, whereas the Banach contraction condition fails for every metric induced by a norm on $\ell_\infty(\mathbb Z)$.

\begin{example}\label{ex:pc-sine-unique}
Consider the difference equation
\begin{equation}\label{eq:pc-sine-unique}
x_{n+1}
=\frac13x_n-\frac43(-1)^n
-\frac{2}{3\sqrt3}
 \sin\!\left(\frac{2\pi}{3}\bigl(x_n-(-1)^n\bigr)\right),
\qquad n\in\mathbb Z.
\end{equation}
Its right-hand side is infinitely differentiable with respect to $x_n$.
Let us show that the equation has exactly one bounded real-valued solution $(x_n)_{n\in \mathbb Z}$,
namely
\begin{equation}\label{eq:pc-sine-solution}
x_n=(-1)^n,\qquad n\in\mathbb Z.
\end{equation}

Put
\[
g(t)=\frac t3-\frac{2}{3\sqrt3}\sin\frac{2\pi t}{3}.
\]
Then \eqref{eq:pc-sine-unique} takes the form
\begin{equation}\label{e3}
x_{n+1}=(-1)^{n+1}+g(x_n-(-1)^n).
\end{equation}
Define the associated operator by
\begin{equation}\label{e*}
(\mathcal T x)_n=(-1)^n+g(x_{n-1}-(-1)^{n-1}),
\qquad n\in\mathbb Z.
\end{equation}
Since $g$ maps bounded sets to bounded sets, $\mathcal T$ maps
$\ell_\infty(\mathbb Z)$ into itself. Its fixed points are precisely
the bounded solutions of \eqref{eq:pc-sine-unique}.

Consider the subset
\begin{equation}\label{e**}
X=\{x^{(k)}:k\in\mathbb Z\},
\qquad x^{(k)}_n=(-1)^n+k,
\end{equation}
equipped with the metric
\[
d(x,y)=\|x-y\|_\infty.
\]
For all $k,l\in\mathbb Z$,
\[
d(x^{(k)},x^{(l)})=|k-l|.
\]
Thus $(X,d)$ is isometric to $\mathbb Z$ with its usual metric.
In particular, it is complete and all its points are isolated.

For every $j\in\mathbb Z$,
\[
g(3j)=j,\qquad g(3j+1)=j,\qquad g(3j+2)=j+1.
\]
Consequently,
\begin{equation}\label{e***}
g(k)=\tau(k):=\left\lfloor\frac{k+1}{3}\right\rfloor,
\qquad k\in\mathbb Z.
\end{equation}
By~(\ref{e*}) and~(\ref{e**}) we have
\[
(\mathcal T x^{(k)})_n=(-1)^n+g((-1)^{n-1}+k-(-1)^{n-1})=(-1)^n+g(k)
\] 
and hence by~(\ref{e**}) and~(\ref{e***})
\begin{equation}\label{eq:pc-sine-invariance}
\mathcal T x^{(k)}=x^{(\tau(k))}.
\end{equation}
Therefore $\mathcal T(X)\subseteq X$.

Let $k<l<m$. Then, since $\tau$ is nondecreasing,
\begin{equation}\label{eq:pc-sine-perimeter-identities}
\begin{aligned}
P(x^{(k)},x^{(l)},x^{(m)})&=2(m-k),\\
P(\mathcal T x^{(k)},\mathcal T x^{(l)},\mathcal T x^{(m)})
&=2\bigl(\tau(m)-\tau(k)\bigr).
\end{aligned}
\end{equation}
 {For the integer $h=m-k\ge2$,
\eqref{eq:rounding-difference} and \eqref{eq:rounding-bounds} give}
\begin{equation}\label{eq:pc-sine-tau-bound}
\tau(m)-\tau(k)
\le\left\lceil\frac h3\right\rceil
\le\frac h2.
\end{equation}
By~(\ref{eq:pc-sine-perimeter-identities})
and~(\ref{eq:pc-sine-tau-bound}), we obtain
\begin{equation}\label{eq:pc-sine-perimeter}
P(\mathcal T x,\mathcal T y,\mathcal T z)
\le\frac12 P(x,y,z)
\end{equation}
for all pairwise distinct $x,y,z\in X$.

Moreover, $\tau$ has no points of prime period two. Indeed, its
monotonicity gives
\[
\tau(k)\geq k
\quad\Longrightarrow\quad
\tau^2(k)\geq\tau(k)\geq k,
\]
and
\[
\tau(k)\leq k
\quad\Longrightarrow\quad
\tau^2(k)\leq\tau(k)\leq k.
\]
Thus $\tau^2(k)=k$ implies $\tau(k)=k$.
By~(\ref{eq:pc-sine-invariance}), the same conclusion holds for
$\mathcal T|_X$.

By~(\ref{eq:pc-sine-perimeter}) and the absence of points of
prime period two, all the hypotheses of Theorem~\ref{t1} are
satisfied, so $\mathcal T$ has a fixed point in $X$. Since
\[
\tau(k)=k\quad\Longleftrightarrow\quad k=0,
\]
its unique fixed point in $X$ is $x^{(0)}$, given by~(\ref{eq:pc-sine-solution}).

The Banach contraction condition does not hold on $(X,d)$.
Indeed,
$\mathcal T x^{(1)}=x^{(0)}$,
$\mathcal T x^{(2)}=x^{(1)}$
and consequently
\begin{equation}\label{eq:pc-sine-not-banach}
d(\mathcal T x^{(1)},\mathcal T x^{(2)})
=1=d(x^{(1)},x^{(2)}).
\end{equation}
By~(\ref{eq:pc-sine-not-banach}), the Banach contraction inequality
cannot hold with a coefficient less than $1$. In fact,
\[
\mathcal T x^{(1)}-\mathcal T x^{(2)}
=x^{(1)}-x^{(2)}
=-\mathbf 1,
\]
where $\mathbf 1$ is the constant sequence with value $1$.
Thus the same obstruction occurs for every metric induced by a norm
on $\ell_\infty(\mathbb Z)$.

It remains to prove uniqueness among all bounded real-valued
solutions on $\mathbb Z$. We first establish
\begin{equation}\label{eq:pc-sine-radial}
|g(t)|\le\frac23|t|,
\qquad t\in\mathbb R.
\end{equation}
Since $g$ is odd, it suffices to consider $t\ge0$.
For $0\le t\le3/2$, the inequalities
$0\le\sin(2\pi t/3)\le2\pi t/3$ yield
\[
\left(\frac13-\frac{4\pi}{9\sqrt3}\right)t
\le g(t)\le\frac t3.
\]
As $4\pi<9\sqrt3$, these bounds imply
$|g(t)|\le2t/3$. For $t\ge3/2$,
\[
|g(t)|
\le\frac t3+\frac{2}{3\sqrt3}
\le\left(\frac13+\frac{4}{9\sqrt3}\right)t
<\frac23t.
\]
This proves \eqref{eq:pc-sine-radial}.

Let $x$ be any bounded solution of \eqref{eq:pc-sine-unique}, and set
$y_n=x_n-(-1)^n$. Then $y$ is bounded and by~(\ref{e3})
\[
y_{n+1}=g(y_n),\qquad n\in\mathbb Z.
\]
By~(\ref{eq:pc-sine-radial}), for every fixed $n\in\mathbb Z$
and every integer $j\ge1$, we have
\begin{equation}\label{eq:pc-sine-backward}
|y_n|
\le\left(\frac23\right)^j|y_{n-j}|
\le\left(\frac23\right)^j
   \sup_{m\in\mathbb Z}|y_m|.
\end{equation}
Letting $j\to\infty$ in~(\ref{eq:pc-sine-backward}), we obtain $y_n=0$.
Hence \eqref{eq:pc-sine-solution} is the unique bounded solution
on $\mathbb Z$.

{ 
\noindent\emph{Interpretation.}
Equation~\eqref{eq:pc-sine-unique} describes a damped discrete-time system
driven by the alternating input $(-1)^n$. The variable
$y_n=x_n-(-1)^n$ is the deviation from the forced response, and it obeys
the autonomous law $y_{n+1}=g(y_n)$. The nonlinearity $g$ is smooth, but
on the invariant lattice $\mathbb Z$ it coincides with the
quantizer $\tau(k)=\lfloor(k+1)/3\rfloor$. Thus a smooth system may behave on
an invariant set exactly as a quantized one. On the associated complete
set of sequence profiles, Theorem~\ref{t1} applies while the Banach
contraction inequality fails. The unique bounded solution has period
two, equal to the period of the forcing.
\par}
\end{example}

In the following example, we apply Theorem~\ref{t1} to establish the existence of exactly two solutions.
\begin{example}\label{ex:pc-floor-two}
Consider the difference equation
\begin{equation}\label{eq:pc-floor-two}
x_{n+1}=(-1)^{n+1}
+\left\lfloor\frac{x_n-(-1)^n}{2}\right\rfloor,
\qquad n\in\mathbb Z,
\end{equation}
where $\lfloor t\rfloor$ denotes the greatest integer not exceeding $t$.
Let us show that its only two bounded real-valued solutions on $\mathbb Z$ are
\begin{equation}\label{eq:pc-floor-solutions}
x_n=(-1)^n
\quad\text{and}\quad
x_n=(-1)^n-1,
\qquad n\in\mathbb Z.
\end{equation}
Moreover, on a complete invariant subset of $\ell_\infty(\mathbb Z)$, the associated operator satisfies the hypotheses of Theorem~\ref{t1}. It cannot satisfy the Banach contraction condition with respect to any metric on this subset.

Consider
\begin{equation}\label{r7}
X=\{x^{(k)}:k\in\mathbb Z\},
\qquad x^{(k)}_n=(-1)^n+k,
\qquad d(x,y)=\|x-y\|_\infty.
\end{equation}
As before,
\[
d(x^{(k)},x^{(l)})=|k-l|,
\]
so $(X,d)$ is complete.

The operator associated with \eqref{eq:pc-floor-two} is
\begin{equation}\label{r8}
(\mathcal T x)_n
=(-1)^n+\left\lfloor\frac{x_{n-1}-(-1)^{n-1}}2\right\rfloor.
\end{equation}
It maps bounded sequences to bounded sequences, and its fixed points
are exactly the bounded solutions of \eqref{eq:pc-floor-two}.
By~(\ref{r7}) and~(\ref{r8}) we have
\[
(\mathcal T x^{(k)})_n=(-1)^n+\left\lfloor\frac{(-1)^{n-1}+k-(-1)^{n-1}}2\right\rfloor =(-1)^n+\left\lfloor\frac k2\right\rfloor.
\]
Moreover, by ~(\ref{r7})
\[
(x^{(\tau(k))})_n=(-1)^n+\tau(k).
\]
Hence, for $k\in\mathbb Z$,
\begin{equation}\label{eq:pc-floor-invariance}
\mathcal T x^{(k)}=x^{(\tau(k))}, \,
\text{ where } \, \tau(k)=\left\lfloor\frac k2\right\rfloor.
\end{equation}
By~(\ref{eq:pc-floor-invariance}), $\mathcal T(X)\subseteq X$.

Let $k<l<m$. Then, by the monotonicity of $\tau$,
\begin{equation}\label{eq:pc-floor-perimeter-identities}
\begin{aligned}
P(x^{(k)},x^{(l)},x^{(m)})&=2(m-k),\\
P(\mathcal T x^{(k)},\mathcal T x^{(l)},\mathcal T x^{(m)})
&=2\bigl(\tau(m)-\tau(k)\bigr).
\end{aligned}
\end{equation}
 {For $h=m-k\ge2$,
\eqref{eq:rounding-difference} and \eqref{eq:rounding-bounds} yield}
\begin{equation}\label{eq:pc-floor-tau-bound}
\tau(m)-\tau(k)
\le\left\lceil\frac h2\right\rceil
\le\frac23h.
\end{equation}
By~(\ref{eq:pc-floor-perimeter-identities})
and~(\ref{eq:pc-floor-tau-bound}), we obtain
\begin{equation}\label{eq:pc-floor-perimeter}
P(\mathcal T x,\mathcal T y,\mathcal T z)
\le\frac23 P(x,y,z)
\end{equation}
for all pairwise distinct $x,y,z\in X$.

Since $\tau$ is nondecreasing, the argument from the preceding
example shows that
\[
\tau^2(k)=k\quad\Longrightarrow\quad\tau(k)=k.
\]
By~(\ref{eq:pc-floor-invariance}), $\mathcal T|_X$
has no points of prime period two.

By~(\ref{eq:pc-floor-perimeter}) and Theorem~\ref{t1},
$\mathcal T$ has a fixed point in $X$ and the number of its fixed
points does not exceed two. Here this upper bound is attained, because
\[
\tau(k)=k
\quad\Longleftrightarrow\quad
k\in\{-1,0\}.
\]
By~(\ref{eq:pc-floor-invariance}), the two sequences
in~(\ref{eq:pc-floor-solutions}) are exactly the fixed points
of $\mathcal T$ in $X$.

We next show that there are no other bounded real-valued solutions
on $\mathbb Z$. Let $x$ be any such solution, and put $y_n=x_n-(-1)^n$.
Then
\[
y_{n+1}=\left\lfloor\frac{y_n}{2}\right\rfloor.
\]
Iterating this identity gives
\begin{equation}\label{eq:pc-floor-backward}
y_n=\left\lfloor\frac{y_{n-j}}{2^j}\right\rfloor,
\qquad n\in\mathbb Z,\quad j\ge1.
\end{equation}
The iteration uses the nested-floor identity in
\eqref{eq:rounding-nested}, applied repeatedly with $m=2$.
Since $y$ is bounded, for sufficiently large $j$ we have
\[
-1<\frac{y_{n-j}}{2^j}<1.
\]
By~(\ref{eq:pc-floor-backward}), we obtain
$y_n\in\{-1,0\}$ for every $n\in\mathbb Z$.
Both values are fixed by $\tau$, so $y_{n+1}=y_n$ for every $n$.
This proves that \eqref{eq:pc-floor-solutions} lists all bounded
solutions on $\mathbb Z$.

Finally, let $u=x^{(-1)}$ and $v=x^{(0)}$.
These are distinct fixed points of $\mathcal T$.
If, for some metric $\rho$ on $X$ and some $q<1$, the inequality
\[
\rho(\mathcal T x,\mathcal T y)\le q\,\rho(x,y)
\]
held for all $x,y\in X$, then
\[
0<\rho(u,v)
=\rho(\mathcal T u,\mathcal T v)
\le q\,\rho(u,v),
\]
a contradiction. Thus no metric on $X$ can make $\mathcal T|_X$
a Banach contraction.

{ 
\noindent\emph{Interpretation.}
For the deviation $y_n=x_n-(-1)^n$, equation~\eqref{eq:pc-floor-two}
becomes $y_{n+1}=\lfloor y_n/2\rfloor$. For integer states, this is
halving with rounding toward $-\infty$, implemented by an arithmetic
right shift by one bit in two's complement representation. This
idealized filter illustrates a dead-band effect caused by
rounding~\cite{OS10}: the nonzero level $-1$ is stationary because
$\lfloor-1/2\rfloor=-1$. The direction of rounding matters, since
truncation toward zero would map $-1/2$ to $0$. Under the alternating
forcing, the two bounded solutions in~\eqref{eq:pc-floor-solutions}
are the two periodic responses of the original system.
\par}
\end{example}

\end{document}